\newtheorem{Theorem}{Theorem}
\newtheorem{Corollary}[Theorem]{Corollary}
\newtheorem{lemma}{Lemma}[section]
\newtheorem{proposition}{Proposition}[section]
\newtheorem*{remark}{Remark}
\newcommand{\sph}{{\mathbb{S}^2}}
\newcommand{\G}{\operatorname{SO}(3)}
\DeclareMathOperator{\SO}{SO}
\newcommand{\ds}{d_{\sph}}
\newcommand{\IQ}{\mathbb{Q}}
\newcommand{\IC}{\mathbb{C}}
\newcommand{\la}{\langle}
\newcommand{\ra}{\rangle}
\providecommand{\norm}[1]{\lVert#1\rVert}
\providecommand{\abs}[1]{\lvert#1\rvert}
\def\N{\mathbb{N}}
\def\C{\mathbb{C}}
\title[Quantum Ergodicity and Averaging Operators on $\sph$]{Quantum Ergodicity and Averaging Operators on the Sphere}
\author{Shimon Brooks}
\address{Department of Mathematics, Bar-Ilan University, Ramat-Gan, 5290002 Israel}
\email{brookss@math.biu.ac.il}
\author{Etienne Le~Masson}
\address{Einstein Inst. of Math., The Hebrew University of Jerusalem, Jerusalem, 91904, Israel}
\email{lemasson@math.huji.ac.il}
\author{Elon Lindenstrauss}
\address{Einstein Inst. of Math., The Hebrew University of Jerusalem, Jerusalem, 91904, Israel}
\email{elon@math.huji.ac.il}
\begin{document}

\begin{abstract}

We prove quantum ergodicity for certain orthonormal bases of $L^2(\mathbb{S}^2)$, consisting of joint eigenfunctions of the Laplacian on $\mathbb{S}^2$ and the discrete averaging operator over a finite set of rotations, generating a free group. If in addition the rotations are algebraic we give a quantified version of this result. The methods used also give a new, simplified proof of quantum ergodicity for large regular graphs. 

\end{abstract}

\thanks{E.~Lindenstrauss\ and E.~Le~Masson\ were supported by ERC AdG Grant 267259. S.~Brooks was supported by NSF grant DMS-1101596, ISF grant 1119/13, and a Marie-Curie Career Integration Grant.  This paper was finalized while both E.~Lindenstrauss and E.~Le~Masson were in residence at MSRI, supported in part by NSF grant 0932078-000. E.~Lindenstrauss also thanks the Miller Institute for Basic Research in Science at UC  Berkeley and the Israeli IAS for their hospitality and support}
\maketitle

\section{Introduction}

Let $M$ be a compact Riemannian manifold, and $\Delta$ its Laplace-Beltrami operator.  We consider a basis of $L^2(M)$ consisting of $\Delta$-eigenfunctions; if there are multiplicities in the spectrum there may be many such.  The property of quantum ergodicity is a spectral analog of the classical ergodic property, stating that 
$$\frac{1}{N(\lambda)} \sum_{\lambda_j\leq \lambda} \left| \langle \psi_j, a\psi_j\rangle
- \int_M a\, d\text{Vol} \right|^2 \to 0$$
where $\{\psi_j\}$ runs over our basis of eigenfunctions, and $a$ is any fixed test function on $M$.  Here $N(\lambda):=|\{\lambda_j\leq \lambda\}|$ is the eigenvalue counting function (with multiplicity).  More generally, one replaces multiplication by $a$ in the inner product by a zero-order pseudodifferential operator $A$, and the $a$ in the integral by the principal symbol $\sigma_A$ of $A$.
The quantum ergodicity property implies that there exists a density $1$ subsequence $\{\psi_{j_k}\}$ of the $\{\psi_j\}$, such that  the measures $|\psi_{j_k}|^2dVol$ converge weak-* to the uniform measure.  This property was first shown to hold --- for any choice of orthonormal basis--- on manifolds $M$ for which the geodesic flow is ergodic, by \v{S}nirlman, Zelditch, and Colin de Verdi\`ere \cite{Shn,Z2,CdV}.  

It is important to note that quantum ergodicity is a property of the {\em basis}, rather than of the manifold $M$; if there are large degeneracies in the spectrum of $\Delta$, it may hold for some bases but not for others. Such large degeneracy occur for the spectrum of the Laplacian on the sphere $\mathbb S ^2$ --- indeed, for the sphere we can write
\begin{equation*}
L ^2 (\mathbb S ^2) = \bigoplus_ {s = 0} ^ \infty \mathcal H _ s
\end{equation*}
where for each $s \in \N$ we take $\mathcal H _ s$ to be the space of spherical harmonics of degree $s$, an eigenspace for the Laplacian with eigenvalue $s(s+1)$ and $\dim (\mathcal H _ s) = 2 s +1$; in particular $\mathcal H_0$ denotes the 1-dimensional space of constant functions.
Here it is easy to see that the standard basis of spherical harmonics fails to be quantum ergodic \cite{CdV}, but Zelditch \cite{ZelSphere} has shown that a random orthonormal basis of eigenfunctions will be quantum ergodic with probablity $1$ (in a natural sense). Strenghening this, VanderKam \cite{Vanderkam} has shown that quantum {\em unique} ergodicity holds for a random orthonormal basis, i.e. that if for any $s$ we chose $\{\psi^{(s)}_j\}_{j=1}^{2s+1}$ to be a random orthonormal basis of $\mathcal H_s$ then a.s. 
\begin{equation}\label{e:que}
\max_{1\leq j\leq 2s+1} \left| \langle \psi^{(s)}_j, a\psi^{(s)}_j\rangle
- \int_{\sph} a\, d\sigma \right| \to 0 \qquad\text{as $s\to\infty$}
\end{equation}
with $\sigma$ denoting the normalized volume measure on $\sph$.

\smallskip

Here, we consider bases of $L^2(\mathbb{S}^2)$, that consist of joint eigenfunctions of $\Delta$ and an averaging operator over rotations of the sphere. Precisely, 
for $N \geq 2$, let $g_1,\ldots,g_N$ be a finite set of rotations in $\G$, and define an operator $T_q$ acting on $L^2(\sph)$, by
\begin{equation}\label{Tq}
T_q f(x) = \frac{1}{\sqrt q}\sum_{j=1}^N (f(g_j x) + f(g_j^{-1} x)) 
\end{equation}
with  $q = 2N -1$; up to the choice of normalization constant this is an averaging operator. 
Since each space $\mathcal H_s$ is invariant under rotations, the self-adjoint operators $T _ q$ preserve $\mathcal H _ s$, and so we can find for every $s$ an orthonormal basis $\{\psi^{(s)}_j\}_{j=1}^{2s+1}$ of $T_q$-functions for $\mathcal H _ s$; thus  $\{\psi^{(s)}_j\}_{j,s}$ gives a complete orthonormal sequence of joint eigenfunctions of~$\Delta$ and~$T _ q$.
We note that for very special choice of rotations --- rotations that correspond to norm $n$ elements in an order in a quaternion division algebra --- it has been conjectured by B{\"o}cherer, Sarnak, and Schulze-Pillot \cite{B-S-SP} that such joint eigenfunctions satisfy the much stronger quantum unique ergodicity condition \eqref{e:que}. This conjecture is still open, but would follow from GRH---  or even from subconvex estimates on values of appropriate $L$-functions at the critical point (cf.~\cite{B-S-SP} for more details).

\smallskip
Our purpose in this paper is to prove quantum ergodicity for joint eigenfunctions of the Laplacian and a fairly general averaging operator on~$\sph$.
We give two flavours of such a result for these joint eigenfunctions: one in which the assumption on the rotations is milder --- namely that they generate a free subgroup of $\G$--- but allows only continuous test functions $a$; and one where $a$ can be rather wild --- an arbitrary element of $L^\infty(\mathbb S^2)$--- but the rotations are in addition assumed to be given by matrices whose entries are algebraic numbers. In this latter case we give \emph{quantitative} estimates. 

\begin{Theorem}\label{t:spheremain}
Let $g _ 1, \dots, g _ N$ be a finite set of rotations in $\G$ that generate a free subgroup. 
Let $\{ \psi^{(s)}_j \}_{j=1}^{2s+1}$ be an orthonormal basis of~$\mathcal{H}_s$ consisting of $T_q$-eigenfunctions, with~$T_q$ the operator defined in \eqref{Tq}.
Then for any continuous function~$a$ on~$ \sph$, we have
\[ \frac 1{2s+1} \sum_{j=1}^{2s+1} \left| \la \psi_j^{(s)}, a \psi_j^{(s)} \ra - \int_{\sph} a \, d \sigma \right|^2 \to 0 \]
when $s \to \infty$.
\end{Theorem}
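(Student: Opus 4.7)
My approach will adapt the quantum ergodicity proof for large regular graphs (in the spirit of Anantharaman--Le~Masson) to the sphere, exploiting that $T_q$ is the (normalized) adjacency operator for the free action of $F_N := \la g_1, \dots, g_N \ra$ on $\sph$; since the $g_i$ generate a free subgroup, the combinatorics of this action are modeled on the $2N$-regular tree. I first reduce the problem: since $\int_\sph \psi_j^{(s)}\, d\sigma = 0$ for $s \geq 1$, we may replace $a$ by $a - \int a\, d\sigma$ and assume $\int a\, d\sigma = 0$. By Stone--Weierstrass and a triangle inequality, it then suffices to prove the variance bound for test functions $a \in \mathcal H_{s'}$ for each fixed $s' \geq 1$.

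Next, I introduce the tree-sphere operators on $L^2(\sph)$,
\[ S_k f(x) = \frac{1}{|W_k|}\sum_{w \in W_k} f(w^{-1} x), \]
where $W_k \subset F_N$ is the set of reduced words of length $k$ in $g_1^{\pm 1}, \dots, g_N^{\pm 1}$. By the standard three-term recurrence on the tree, $S_k = p_k(T_q)$ for Chebyshev-like polynomials $p_k$ of degree $k$; hence $S_k\psi_j^{(s)} = p_k(\lambda_j^{(s)})\psi_j^{(s)}$ on any joint eigenfunction. The basic algebraic handle will then be the commutator relation
\[ \la\psi_j^{(s)}, [S_k, a]\psi_l^{(s)}\ra = \bigl(p_k(\lambda_j^{(s)}) - p_k(\lambda_l^{(s)})\bigr)\la\psi_j^{(s)}, a\psi_l^{(s)}\ra, \]
which controls the matrix elements of $a$ in the $T_q$-eigenbasis via spectral gaps within $T_q|_{\mathcal H_s}$; together with Cauchy--Schwarz and the rotation-invariant reproducing identity $\sum_l |\psi_l^{(s)}(x)|^2 \equiv 2s+1$, this converts into trace-type variance bounds on $\mathcal H_s$.

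The core estimate then requires two spectral inputs. The first is a Kesten--McKay-type equidistribution of the $T_q$-eigenvalues on $\mathcal H_s$: computing
\[ \frac{1}{2s+1}\operatorname{tr}(T_q^n|_{\mathcal H_s}) = \frac{q^{-n/2}}{2s+1}\sum_{|w|=n}\chi_s(\pi(w)), \]
and using the character asymptotic $\chi_s(R_\theta)/(2s+1) \to 0$ for any non-identity rotation $R_\theta$, isolates the contribution of walks returning to the identity in $F_N$---the classical tree count yielding the Kesten--McKay moments on $[-2,2]$. The second is a uniform lower bound on the Chebyshev averages $\tfrac{1}{K}\sum_{k<K}|p_k(\lambda)|^2$ for $\lambda$ in any compact subinterval of $(-2,2)$, provided by the Christoffel-function asymptotics of the spherical functions for the tree. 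Combined with the trivial bound $|\la\psi_j^{(s)}, a\psi_j^{(s)}\ra|\le\|a\|_\infty$ for the $o(2s+1)$ exceptional eigenvalues outside the tempered interval, these inputs would conclude the argument.

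The hardest part will be handling the potential absence of a spectral gap for $T_q$ on $L^2_0(\sph)$, in contrast to the large-graph case. The $T_q$-spectral measure of a fixed $a \in \mathcal H_{s'}$ could a priori charge the non-tempered spectrum outside $[-2, 2]$, and the $K$-averaged ergodic-type estimates on $\tfrac1K\sum_k \|S_k a\|_{L^2}^2$ must nonetheless show that this contribution is negligible. The key leverage is that $F_N$, being a free nonabelian subgroup of $\SO(3)$, is necessarily dense (any closed proper subgroup of $\SO(3)$ is either finite or abelian); so $F_N$ acts irreducibly on each $\mathcal H_{s'}$ with $s'\geq 1$ and the averages must contract. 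Reconciling this qualitative contraction with the quantitative separation of tempered and non-tempered regimes, uniformly in $s$, is the delicate technical crux.
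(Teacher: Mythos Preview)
Your initial reductions are sound: centering $a$, then approximating by finite sums of spherical harmonics, is exactly what the paper does. From there, however, the proposal diverges from a workable argument in two places.

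First, the commutator relation is the wrong algebraic handle. The identity
\[
\la\psi_j^{(s)}, [S_k, a]\psi_l^{(s)}\ra = \bigl(p_k(\lambda_j^{(s)}) - p_k(\lambda_l^{(s)})\bigr)\la\psi_j^{(s)}, a\psi_l^{(s)}\ra
\]
vanishes identically on the diagonal $j=l$, so it gives no direct control on the quantities $\la\psi_j^{(s)}, a\psi_j^{(s)}\ra$ you are trying to bound; off-diagonal control does not by itself force the diagonal to be small. The paper instead uses a \emph{conjugation}: with $P_n$ the Chebyshev polynomials of the first kind,
\[
\Bigl\la\psi_j^{(s)}, P_{2n}(T_q/2)\,a\,P_{2n}(T_q/2)\,\psi_j^{(s)}\Bigr\ra = P_{2n}(\lambda_j/2)^2\,\la\psi_j^{(s)}, a\psi_j^{(s)}\ra,
\]
and then averages over $n\le T$ together with the elementary lower bound $\frac{1}{T}\sum_{n\le T}P_{2n}(\lambda/2)^2 \gtrsim 1$ (valid uniformly in $\lambda$, tempered or not --- so no Kesten--McKay is needed here). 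This reduces the variance to a Hilbert--Schmidt norm $\bigl\|\tfrac{1}{T}\sum_n P_{2n}\,a\,P_{2n}\big|_{\mathcal H_s}\bigr\|_{HS}^2$.

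Second, you have misidentified the hard part. Once $a\in H=\bigoplus_i \mathcal H_{s_i}$ with each $s_i\ge 1$, the space $H$ is finite-dimensional and $T_q$-invariant, and $T_q|_H$ has finitely many eigenvalues, none equal to $\pm\frac{q+1}{\sqrt q}$; thus a spectral gap $\beta_H>0$ is \emph{automatic} --- there is nothing delicate to reconcile. The genuine technical crux, which your outline does not touch, is the Hilbert--Schmidt estimate itself: one must realise the restriction to $\mathcal H_s$ by convolving the tree kernel of $P_{2n}\,a\,P_{2n}$ with the zonal spherical harmonic $Z^{(s)}$, then show that for $s$ large relative to $T$ the tree ball $B_{4T}(x)\subset\sph$ is $s^{-1/2}$-separated for $x$ outside a small exceptional set (this is where freeness of $\langle g_1,\dots,g_N\rangle$ enters), so that the cross terms $\la Z_z^{(s)}, Z_{z'}^{(s)}\ra$ are negligible. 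Only after that reduction does the spectral gap $\beta_H$ come in, via a non-backtracking operator $T_q'$ on the ``arc'' space, to produce the decay $\lesssim s/(\beta_H^2 T)$. Fixing $T$ and sending $s\to\infty$, then $T\to\infty$, finishes the proof for $a\in H$.
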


It is not hard to show in this case that as $s\to\infty$, the distribution of the $T_q$-eigenvalues of $\{\psi ^ {( s )} _ j\}_{j=1}^{2s+1}$ tends to a specific measure, the Kesten-McKay distribution, whose support is the interval $[-2, 2]$ (see \S\ref{s:Kesten-McKay} for details), which implies the following version of Theorem~\ref{t:spheremain}.
\begin{Corollary}\label{c:spheremain}
Let $g _ 1, \dots, g _ N$ and $T _ q$ be as in Theorem~\ref{t:spheremain}, and let $I$ be an arbitrary (fixed) subinterval of $[-2,2]$. Let as before $\{ \psi^{(s)}_j \}_{j=1}^{2s+1}$ be an orthonormal basis of $\mathcal{H}_s$ consisting of $T_q$-eigenfunctions, and define $\lambda(s,j)$ by $T_q \psi^{(s)}_j = \lambda(s,j) \psi^{(s)}_j$, and
$$N(I,s)= \# \{ j : \lambda(s,j) \in I \}.$$
Then for any continuous function~$a$ on~$ \sph$, we have
\[ \frac 1{N(I,s)} \sum_{\{ j : \lambda(s,j) \in I \} } \left| \la \psi_j^{(s)}, a \psi_j^{(s)} \ra - \int_{\sph} a \, d \sigma \right|^2 \to 0 \]
when $s \to +\infty$.
\end{Corollary}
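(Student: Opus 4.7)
The plan is to deduce Corollary~\ref{c:spheremain} directly from Theorem~\ref{t:spheremain} by a trivial positivity/majorization argument, combined with the eigenvalue equidistribution statement that is flagged in the excerpt and proved in Section~\ref{s:Kesten-McKay}. The two inputs I would use are: (i) Theorem~\ref{t:spheremain} itself, giving the average over \emph{all} $j$; and (ii) the fact that for any fixed subinterval $I \subseteq [-2,2]$ of positive Kesten--McKay measure $\mu_{\mathrm{KM}}(I)$, one has
\[
\frac{N(I,s)}{2s+1} \;\longrightarrow\; \mu_{\mathrm{KM}}(I) \;>\; 0 \qquad\text{as } s \to \infty.
\]

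Given these, the argument is essentially one line. Write
\[
D_j^{(s)} := \bigl| \la \psi_j^{(s)}, a \psi_j^{(s)} \ra - \textstyle\int_{\sph} a\, d\sigma \bigr|^2 \;\geq\; 0,
\]
and use nonnegativity to extend the sum over $\{j : \lambda(s,j) \in I\}$ to the full sum over $1 \leq j \leq 2s+1$:
\[
\frac{1}{N(I,s)} \sum_{\{j:\lambda(s,j)\in I\}} D_j^{(s)} \;\leq\; \frac{2s+1}{N(I,s)} \cdot \frac{1}{2s+1} \sum_{j=1}^{2s+1} D_j^{(s)}.
\]
As $s \to \infty$, the first factor converges to the finite constant $1/\mu_{\mathrm{KM}}(I)$ by (ii), while the averaged sum on the right tends to $0$ by Theorem~\ref{t:spheremain}. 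Their product therefore tends to $0$, which is precisely the conclusion of the corollary.

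There is essentially no real obstacle here: the genuine content sits in Theorem~\ref{t:spheremain} and in the Kesten--McKay convergence (ii), which itself is presumably handled by a moment-matching argument (the $2k$-th moment of the empirical eigenvalue distribution equals $(2s+1)^{-1} \operatorname{tr}(T_q^{2k}|_{\mathcal H_s})$, and for $s$ large compared to $k$ the combinatorics of closed walks in the Cayley graph of the free group coincides with that on the $2N$-regular tree, giving precisely the Kesten--McKay moments). The only caveat worth mentioning in the write-up is the degenerate case $\mu_{\mathrm{KM}}(I) = 0$ (e.g.\ $I$ a single point at $\pm 2$), for which $N(I,s)$ can vanish and the statement is vacuous; the argument above covers all subintervals of positive Kesten--McKay measure, which is the content of the corollary.
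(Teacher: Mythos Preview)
Your proposal is correct and is precisely the argument the paper has in mind: the paper does not give a separate proof of the corollary but simply notes that the Kesten--McKay law of \S\ref{s:Kesten-McKay} (Lemma~\ref{sphere kesten mckay}, giving $N(I,s)\sim C_q(I)\cdot s$ with $C_q(I)>0$) ``implies'' the corollary from Theorem~\ref{t:spheremain}, and your positivity/majorization step is exactly the intended one-line implication.
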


Finally, we give the following more quantitative result under more stringent assumptions on the rotations:

\begin{Theorem}\label{t:spherealgebraic}
Let $g _ 1, \dots, g _ N$ be a finite set of rotations in $\G$, that generate a free subgroup, and moreover so that all entries appearing in the matrices representing the $g _ i$ are given by algebraic numbers.
Let $\{ \psi^{(s)}_j \}_{j=1}^{2s+1}$ be an orthonormal basis of~$\mathcal{H}_s$ consisting of $T_q$-eigenfunctions with~$T_q$ defined from the $g _ i$ as above.
Then for any function $a \in L ^ \infty (\sph)$, we have
\[ \frac 1{2s+1} \sum_{j=1}^{2s+1} \left| \la \psi_j^{(s)}, a \psi_j^{(s)} \ra - \int_{\sph} a \, d \sigma \right|^2 \lesssim \frac {\norm {a} _ \infty }{ \log s},
\]
with the implicit constant depending only on $q=2N-1$ and the choice of rotations $g _ 1, \dots, g _ N$.
\end{Theorem}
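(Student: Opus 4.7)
The plan is to adapt the quantitative quantum ergodicity argument developed earlier in this paper for $(q+1)$-regular graphs to the sphere, with the Bourgain--Gamburd spectral gap for algebraic free subgroups of $\G$ playing the role that the expansion property plays in the graph setting. After subtracting $\int_{\sph} a\, d\sigma$ we may assume $a$ has mean zero; the trivial bound is $\norm{a}_\infty^2$, so the point is to extract a factor $1/\log s$. I would introduce the Chebyshev polynomials of the second kind $X_n$, determined by $X_0 = 1$, $X_1(\lambda) = \lambda$, $X_{n+1}(\lambda) = \lambda X_n(\lambda) - X_{n-1}(\lambda)$; these are adapted to the $(q+1)$-regular tree, and as operators they implement non-backtracking walks, namely
\[
X_n(T_q) = q^{-n/2}\sum_{w \in \mathcal W_n} L_w,
\]
where $\mathcal W_n$ is the set of reduced words of length $n$ in the free group $\la g_1, \dots, g_N\ra$ and $L_w$ is translation by $w$ on $\sph$. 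The Bourgain--Gamburd theorem provides the essential analytic input: uniformly in $s$, $\norm{T_q|_{L^2_0(\sph)}}_{\mathrm{op}} \leq K$ with $K < (q+1)/\sqrt{q}$ depending only on the $g_i$, so every non-trivial eigenvalue $\lambda(s,j)$ lies in $[-K, K]$ and $\abs{X_n(\lambda(s,j))}$ remains bounded uniformly in $n$.

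Using the eigenvalue identity $X_n(\lambda(s,j))^2 \la \psi_j^{(s)}, a\psi_j^{(s)}\ra = \la \psi_j^{(s)}, X_n(T_q)\,a\,X_n(T_q)\psi_j^{(s)}\ra$ together with the Dirichlet-kernel estimate
\[
\sum_{n=0}^{L-1} X_n(\lambda)^2 \gtrsim L \qquad \text{uniformly in } \lambda \in [-K, K],
\]
I would express $\la\psi_j^{(s)}, a\psi_j^{(s)}\ra$ as an average of the smeared quantities $\la\psi_j^{(s)}, X_n(T_q) a X_n(T_q)\psi_j^{(s)}\ra$ over $n \in [0, L-1]$. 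Squaring, applying Cauchy--Schwarz in $n$, summing over $j$, and using the standard inequality $\sum_j \abs{\la\psi_j, B\psi_j\ra}^2 \leq \norm{BP_s}_{\mathrm{HS}}^2$ reduces the problem to controlling
\[
\frac{1}{L}\sum_{n=0}^{L-1} \mathrm{Tr}_{\mathcal H_s}\!\bigl(X_n(T_q)^2\,a\,X_n(T_q)^2\,a\bigr).
\]
The goal is to bound this by $C\,(2s+1)\,\norm{a}_\infty^2/L$ for $L$ up to $c\log s$, which on dividing by $2s+1$ yields the theorem.

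The main obstacle is precisely this trace bound. A trivial application of operator-norm bounds only gives $\mathrm{Tr}_{\mathcal H_s}(X_n(T_q)^2 a X_n(T_q)^2 a) \lesssim (2s+1)\norm{a}_\infty^2$ uniformly in $n$, which is insufficient; one needs either an improved individual bound or cancellation in the sum over $n$. My approach would be to expand $X_n(T_q)^2$ in the $L_w$ basis and exploit the explicit $\G$-character formula $\mathrm{Tr}_{\mathcal H_s}(L_g) = \chi_s(g) = \sin\!\bigl((s+\tfrac{1}{2})\theta_g\bigr)/\sin\!\bigl(\tfrac{1}{2}\theta_g\bigr)$, with $\theta_g$ the rotation angle of $g$: the trace becomes a character sum over pairs of words, and the Bourgain--Gamburd spectral gap provides effective equidistribution of non-backtracking walks of length up to $\log s$, while the algebraicity together with the free-group structure prevents short coincidences among orbit points. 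Balancing $L \sim \log s$ is natural because this is precisely the scale on which non-backtracking walks of length $L$ produce on the order of $s$ distinct sphere points, which just fit into the $(2s+1)$-dimensional harmonic space $\mathcal H_s$; combined with the Parseval-type identity $\sum_j \abs{\psi_j^{(s)}(x)}^2 = 2s+1$, this should yield the required estimate.
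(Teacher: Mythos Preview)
Your reduction contains a fatal step: applying Cauchy--Schwarz in the time variable $n$ destroys exactly the cancellation you need. After that step you are left with $\frac{1}{L}\sum_{n=0}^{L-1}\norm{X_n(T_q)\,a\,X_n(T_q)P_s}_{HS}^2$, a sum of nonnegative terms. Already the $n=0$ term equals $\mathrm{Tr}_{\mathcal H_s}(a^2)=\frac{2s+1}{4\pi}\norm{a}_2^2$, and the other terms are no smaller, so the average over $n$ stays of order $(2s+1)\norm{a}_2^2$ with no $1/L$ gain whatsoever. Your later remark that ``one needs \dots\ cancellation in the sum over $n$'' cannot be realized once the terms have been made nonnegative. (There is also a smaller error: the Bourgain--Gamburd gap only gives $K<(q+1)/\sqrt q$, not $K<2$, so $\abs{X_n(\lambda)}$ is \emph{not} uniformly bounded on $[-K,K]$; near $\lambda=\pm 2$ one has $\abs{X_n}\sim n$, and for untempered $\lambda$ it grows exponentially.)

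The paper's argument avoids this pitfall by never separating the $n$'s: one passes directly from $\sum_j\abs{\la\psi_j,a\psi_j\ra}^2$ to the Hilbert--Schmidt norm of the \emph{averaged} operator $\frac{1}{T}\sum_n P_{2n}(T_q/2)\,a\,P_{2n}(T_q/2)\,\tilde Z^{(s)}$ (here $P_n$ are Chebyshev polynomials of the \emph{first} kind, whose tree propagation formula is Lemma~\ref{estimate}). The kernel of this averaged operator is then decomposed into pieces $S_{2j,2k,2l}$ indexed by the tree distance $2l$ between the two endpoints; distinct $l$'s are orthogonal in Hilbert--Schmidt, and within each $l$ the sum over $n\geq l/2$ is controlled by the spectral gap for the \emph{non-backtracking} transfer operator $T_q'$ (Lemma~\ref{l:edgegapsphere}), producing a convergent geometric series and the factor $1/(\beta^2 T)$ (Proposition~\ref{p:egorov}). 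The second, and equally essential, use of algebraicity is not the spectral gap but Lemma~\ref{l:algebraicity and discreteness}: height bounds force any nontrivial word of length $\leq 8T$ to be a rotation through angle $\gtrsim M^{-8T}$, which gives the quantitative orbit-separation condition~\eqref{e:Tcondition} for $T\leq c\log s$ and makes the zonal kernels $Z_z^{(s)}$ nearly orthogonal along orbits (Lemma~\ref{l:HSnorm}). Your sketch gestures at both ingredients but does not supply either, and the proposed character-sum route is left entirely undeveloped.
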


A key ingredient in this sharper variant is a result of Bourgain and Gamburd \cite{BG} which shows that for such rotations the action of the operator $T_q$ on $L^2( \mathbb S^2)$ has a spectral gap.

These results are closely related to the results of the second-named author with N. Anantharaman \cite{ALM}, proving quantum ergodicity for expander sequences of large regular graphs that satisfy a non-degeneracy condition, that the radius of injectivity grow at almost all points (see condition (BST) below).
In Theorem~\ref{t:spherealgebraic}, the spectral gap result of Bourgain and Gamburd is a close analog to the expander condition on the graphs. The algebraicity of the rotations is also used to give a lower bound on how close a nontrivial word of length $n$ in $g _ 1$,\dots,$g_N$ can be, which can be viewed as an analogue of the injectivity radius condition. In Theorem~\ref{t:spheremain}, we reduce to the case of $a$ being a sum of finitely many spherical harmonics, where we get a spectral gap for $T_q$ simply because $a$ belongs to a finite dimensional $T_q$-invariant subspace of $L^2(\sph)$, and the assumption that $g _ 1, \dots, g _ N$ generate a free group gives a weaker, unquantified, substitute for the injectivity radius condition.

We will use a new and alternative method to \cite{ALM}, more straightforward in the sense that it does not use pseudo-differential calculus on the graphs (\cite{LM}), but instead a discrete wave propagation.  Although it gives a less general result in the present form, it may be of independent interest, and gives a quantitative improvement in some aspects. We thus bring in Sections~\ref{qe on graphs} and \ref{main estimate graphs} a new proof of quantum ergodicity for large regular graphs (i.e., the main result of \cite{ALM}), in order to introduce the ideas to be used in the sequel.  Then in Section~\ref{sphere}, we return to the setting of $\mathbb{S}^2$, and show the modifications necessary to work on the sphere in $\mathcal{H}_s$, and prove Theorems~\ref{t:spheremain} and \ref{t:spherealgebraic}.

\section{Quantum ergodicity on regular graphs}\label{qe on graphs}

Let $(G_k)$ be a sequence of connected $(q+1)$-regular graphs, $G_k=(V_k, E_k)$ with $V_k =\{1,\ldots, k\}$.
We are interested in the eigenfunctions of the averaging operator
$$T_q f(x) = \frac1{\sqrt{q}} \sum_{d(x,y)=1} f(y) $$
acting on functions of the vertices of the graph.
We  parametrize the spectrum of $T_q$ by $\lambda = 2\cos{\theta_\lambda}$.
The spectrum of this operator is contained in $$\left[-2\cosh\left(\frac{\log q}{2}\right),2\cosh\left(\frac{\log q}{2}\right)\right].$$ It can be divided into two parts: the \emph{tempered spectrum} is the part contained in the interval $[-2,2]$, and the \emph{untempered spectrum} is the part lying outside this interval.
We assume the following conditions:

(EXP) The sequence of graphs is a family of expanders. That is, there exists a fixed $\beta>0$ such that the spectrum of $T_q$ on $L^2(G_k)$ is contained in $\left\{2\cosh\left(\frac{\log q}{2}\right)\right\}\cup \left[-2\cosh\left(\frac{\log q}{2} - \beta \right), 2\cosh\left(\frac{\log q}{2} - \beta \right)\right]$ for all $k$.

(BST) The sequence of graphs converges to a tree in the sense of Benjamini-Schramm. 
More precisely, for all $R$, $\frac{|\{x\in V_k, \rho(x)<R\}|}{k}\to 0$ where $\rho(x)$ is the injectivity radius at $x$ (meaning the largest $\rho$ such that the ball $B(x, \rho)$ in $G_k$ is a tree). Or equivalently, there exists $R_k\to +\infty$ and $\alpha_k\to 0$ such that $$\frac{|\{x\in V_k, \rho(x)<R_k\}|}{k}\leq \alpha_k.$$

The last condition is satisfied in particular if the injectivity radius goes to infinity (with $R_k$ taken to be the minimal injectivity radius and $\alpha_k=0$).

\begin{Theorem} \label{t:main}  Assume that $(G_k)$ satisfies (BST) and (EXP). Denote by $\{\psi^{(k)}_1,\ldots, \psi^{(k)}_k\}$ an orthonormal basis of eigenfunctions of $T_q$ on $G_k$.

Let $a_k : V_k \to \C$ be a sequence of functions such that 

$$\sum_{x\in V_k} a_k(x)=0 \quad \text{and} \quad   \norm{a_k}_\infty \leq 1.$$

Then $$\frac1{k} \sum_{j=1}^k \left| \la \psi^{(k)}_j, a_k \psi^{(k)}_j\ra\right|^2\to 0,$$
when $k\to +\infty$.

More precisely,
 \begin{align}\label{e:qegraph}
  \frac1{k} \sum_{j=1}^k \left| \la \psi^{(k)}_j, a_k \psi^{(k)}_j\ra\right|^2
   &\lesssim \beta^{-2} \min\left\{ R_k, \log(1/ \alpha_k) \right\}^{-1} \|a\|_2^2 + \alpha_k^{1/2} \|a\|_\infty^2,
 \end{align}
 where the implicit constant depends only on the degree $q+1$.
\end{Theorem}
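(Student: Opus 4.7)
The plan is to mimic the classical variance estimate of Schnirelman--Zelditch--Colin de Verdière, but with the pseudodifferential calculus and Egorov theorem used in \cite{ALM} replaced by a discrete ``wave propagation'' built directly from Chebyshev polynomials in $T_q$. The key object is $C_n := T_n(T_q/2)$, where $T_n(\cos\theta)=\cos(n\theta)$ is the Chebyshev polynomial of the first kind. By functional calculus, $C_n$ acts on a $T_q$-eigenfunction with eigenvalue $\lambda_j=2\cos\theta_j$ by multiplication by $\cos(n\theta_j)$, a bounded oscillatory factor on the tempered spectrum, so $C_n$ plays the role of the real part of a unitary quantum propagator. The crucial local structure is that $C_n$ is a polynomial of degree $n$ in $T_q$, hence its kernel $C_n(x,y)$ vanishes for $d(x,y)>n$ and, within the $n$-injectivity neighborhood of $x$, coincides with the translation-invariant tree kernel, which is explicitly given by the Kesten--McKay spectral measure.

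With these propagators, form the Birkhoff-type time average
\[
\langle a\rangle_N := \frac{1}{N}\sum_{n=0}^{N-1} C_n\, a\, C_n,
\]
which satisfies
\[
\langle \psi_j, \langle a\rangle_N \psi_j\rangle = \Bigl(\frac{1}{N}\sum_{n=0}^{N-1}\cos^2(n\theta_j)\Bigr)\,\langle\psi_j, a\psi_j\rangle.
\]
The prefactor equals $\tfrac12 + O((N|\sin\theta_j|)^{-1})$ for tempered eigenvalues, and the spectral gap (EXP) forces $|\sin\theta_j|\gtrsim\beta^{1/2}$ except for the trivial eigenvalue corresponding to constants, which is annihilated by $a_k$ thanks to the mean-zero hypothesis. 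This lets us replace $\langle\psi_j,a\psi_j\rangle$ by $2\langle\psi_j,\langle a\rangle_N\psi_j\rangle$ up to an error producing the $\beta^{-2}$ factor in the final bound. Cauchy--Schwarz followed by summation over the orthonormal basis then yields
\[
\sum_j \bigl|\langle\psi_j,\langle a\rangle_N\psi_j\rangle\bigr|^2 \le \sum_j \|\langle a\rangle_N \psi_j\|_2^2 = \|\langle a\rangle_N\|_{\mathrm{HS}}^2,
\]
reducing Theorem~\ref{t:main} to a bound on $k^{-1}\|\langle a\rangle_N\|_{\mathrm{HS}}^2$.

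Using that the $C_n$ commute and the identity $C_nC_m=\tfrac12(C_{n+m}+C_{|n-m|})$, one expands
\[
\|\langle a\rangle_N\|_{\mathrm{HS}}^2 = \sum_{x,y\in V_k} a(x)\overline{a(y)}\,K_N(x,y),\qquad K_N(x,y):=\frac{1}{N^2}\sum_{n,m<N}(C_nC_m)(x,y)^2,
\]
with $K_N$ supported in $\{d(x,y)\le 2N\}$. For vertices $x$ whose injectivity radius exceeds $2N$, $K_N(x,y)$ equals its translation-invariant tree analogue, and a direct Kesten--McKay computation shows that it has an $O(N^{-1})$ off-diagonal part and a diagonal part absorbed by the mean-zero hypothesis on $a_k$, giving a contribution of $O(N^{-1}\|a_k\|_2^2)$. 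For the bad vertices with small injectivity radius, whose measure is at most $\alpha_k$ by (BST), a Cauchy--Schwarz argument together with the pointwise bound $|a_k|\le\|a_k\|_\infty$ yields an $O(\alpha_k^{1/2}\|a_k\|_\infty^2)$ contribution. The constraint $N\lesssim\log(1/\alpha_k)$ arises because the operator norms of $C_n$ on the graph can grow exponentially in $n$ away from the tempered spectrum (controlled only by the spectral gap (EXP)), and must be absorbed against the $\alpha_k$-fraction of bad vertices; together with $N\le R_k$ imposed by the tree comparison, optimizing $N=\min(R_k,\log(1/\alpha_k))$ produces the stated bound.

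The main obstacle I anticipate is the tree computation of $K_N$: extracting, via the Kesten--McKay spectral decomposition, the $O(N^{-1})$ decay of the double time average off the diagonal in a form that is compatible with the graph-to-tree comparison. A secondary delicate point is the tempered-edge analysis in the time-averaging step, where $\cos(n\theta)$ ceases to be oscillatory as $\theta\to 0$ or $\pi$; this is where the (EXP) spectral gap is used essentially and where the $\beta^{-2}$ factor appears.
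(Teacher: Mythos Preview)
Your overall architecture---Chebyshev propagators $C_n=P_n(T_q/2)$, a time average $\langle a\rangle_N$, and reduction to an HS-norm bound---matches the paper. But the two places where you invoke (EXP) are both wrong, and this leaves the central estimate unproven.

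First, the time-averaging step. You claim (EXP) forces $|\sin\theta_j|\gtrsim\beta^{1/2}$, but (EXP) says nothing about tempered eigenvalues: it only bounds the \emph{untempered} spectrum away from $\pm 2\cosh(\tfrac{\log q}{2})$, while tempered $\lambda_j\in[-2,2]$ can and do accumulate at the edges $\theta_j\to 0,\pi$ by Kesten--McKay. Fortunately no gap is needed here: the elementary identity $\frac1T\sum_{n=1}^T\cos^2(n\theta)=\frac{2T-1}{4T}+\frac{\sin((2T+1)\theta)}{4T\sin\theta}$ is $\geq 0.3$ \emph{uniformly} in $\theta\in[0,\pi]$ once $T\geq 10$ (this is the paper's Lemma~\ref{l:unitarity}), so $|\langle\psi_j,a\psi_j\rangle|\lesssim|\langle\psi_j,\langle a\rangle_T\psi_j\rangle|$ with an absolute constant and no $\beta$-dependence.

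Second, and this is the real gap: the HS-norm bound on $\langle a\rangle_T$ is where (EXP) and the mean-zero hypothesis must actually be used, and your plan does not use them there. Your expansion $\sum_{x,y}a(x)\overline{a(y)}K_N(x,y)$ with $K_N(x,y)=N^{-2}\sum_{n,m}(C_nC_m)(x,y)^2\geq 0$ gives, on the tree, $K_N(x,y)\asymp N^{-1}q^{-d(x,y)}$; summing $|a(x)\overline{a(y)}|K_N(x,y)$ over a ball of radius $2N$ then yields only $O(\|a\|_2^2)$, not $O(N^{-1}\|a\|_2^2)$, because the $q^{-d}$ decay is exactly cancelled by the $q^d$ growth of spheres. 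The ``diagonal part absorbed by mean-zero'' claim is also off: the $x=y$ term is $K_N(0,0)\|a\|_2^2$, which mean-zero does not kill. To get genuine decay you must exploit that $a\in L^2_0$ \emph{together with} the spectral gap, and the paper does this by rewriting the kernel pieces $\sum_{z\in E_{2j,2k}(x,y)}a(z)$ as $q^{k+j-l}(T_q')^{k+j-l}Ba$ evaluated on the \emph{arc graph}, where $T_q'$ is the non-backtracking transfer operator; (EXP) then gives $\|(T_q')^m\|\lesssim e^{-\beta m}$ on the non-constant subspace (Lemmas~\ref{l:Tq sum} and~\ref{l:edgegap}), which is exactly what produces the $\beta^{-2}T^{-1}\|a\|_2^2$ term. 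This non-backtracking mechanism is the new idea in the proof and is absent from your proposal.
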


\begin{remark}
The spectral average in \eqref{e:qegraph} can be restricted to eigenvalues in any fixed subinterval of $\left[-2\cosh\left(\frac{\log q}{2}\right),2\cosh\left(\frac{\log q}{2}\right)\right]$, using the Kesten-McKay law (see for example Section 5 of  \cite{ALM}).
\end{remark}

 We now introduce the Chebyshev polynomials $P_n$ of the first kind for every $n \in \N$ defined by the relation
$$ P_n(\cos\theta) = \cos(n\theta). $$
As each $P_n$ is a polynomial, the definition of the operator $P_{n}(T_q/2)$ is clear.

We denote by $\norm{A}_{HS}$ the Hilbert-Schmidt norm of the operator $A$ acting on $L^2(G_k)$, and write $L^2_0(G_k)$ for the subspace of $L^2(G_k)$ orthogonal to the constants.  We will prove the following proposition in Section~\ref{main estimate graphs}.

\begin{proposition}\label{p:egorov_graphs}
Let $a$ be a function in $L^2_0(G_k)$. For any integer $T \geq 1$,
 \begin{multline*}
 \left\| \frac1T \sum_{n=1}^T P_{2n}(T_q/2) a P_{2n}(T_q/2) \right\|_{HS}^2 
 \lesssim \frac{\|a\|_2^2}{\beta^2 T} \\
 + q^{8T} |\{x\in G_k, \rho_k(x) \leq 4T\}| \norm{a}_\infty^2,
 \end{multline*}
 where $\rho_k(x)$ is the radius of injectivity at the vertex $x \in G_k$, and the implicit constant in the inequality depends only on $q$.
\end{proposition}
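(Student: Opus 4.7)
The plan is to expand $\|\bar M_a\|_{HS}^2$ as a double trace, apply the Chebyshev product identity to collapse the sum over $(n,m)$ into single $P_{2k}(T_q/2)$ terms, and then split the resulting bilinear expression into a tree-like bulk (handled spectrally via the gap $\beta$) plus a boundary remainder (handled by crude bounds). First, using self-adjointness and commutativity of the $P_{2n}(T_q/2)$ together with cyclicity of the trace,
\[
\left\|\tfrac{1}{T}\sum_{n=1}^T P_{2n}\, a\, P_{2n}\right\|_{HS}^2
= \tfrac{1}{T^2} \sum_{n,m=1}^T \mathrm{tr}\bigl(\bar a\, (P_{2n}P_{2m})\, a\, (P_{2n}P_{2m})\bigr),
\]
where I abbreviate $P_{2n} := P_{2n}(T_q/2)$ and let $a$ also denote the multiplication operator. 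The Chebyshev product identity $2\cos(2n\theta)\cos(2m\theta)=\cos(2(n+m)\theta)+\cos(2|n-m|\theta)$ yields $P_{2n}P_{2m}=\tfrac{1}{2}(P_{2(n+m)}+P_{2|n-m|})$; expanding and collecting terms produces a weighted sum, indexed by $k,\ell \in \{0,\dots,2T\}$, of bilinear quantities $F(k,\ell) = \sum_{x,y} \bar a(x) a(y)\, P_{2k}(x,y) P_{2\ell}(x,y)$, with combinatorial multiplicities coming from the counts of $(n,m)$ with prescribed $n+m$ and $|n-m|$.

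Next I would exploit the tree structure. On the $(q+1)$-regular tree, $P_{2k}(x,\cdot)$ can be computed from the recursion $T_q P_n = P_{n+1}+P_{n-1}$ as an explicit linear combination of normalized sphere operators, supported at even distances $\leq 2k$ from $x$. For any $x \in V_k$ with $\rho(x)\geq 2k$, the graph kernel $P_{2k}(x,y)$ agrees with its tree value. I split $V_k = V^{\mathrm{tree}} \sqcup V^{\mathrm{bdy}}$ with $V^{\mathrm{bdy}} = \{x: \rho(x)\leq 4T\}$, and decompose each $F(k,\ell)$ into a tree piece and a boundary remainder. The boundary piece is estimated using $|P_{2k}(x,y)| \leq \|P_{2k}\|_{\mathrm{op}} \lesssim q^k$ and $|a| \leq \|a\|_\infty$, yielding after summation over $k,\ell\leq 2T$ a contribution bounded by $q^{8T}|V^{\mathrm{bdy}}|\|a\|_\infty^2$, which is the second term of the statement.

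For the main (bulk) term, I pass to the $T_q$-spectral decomposition $a = \sum_i \Pi_i a$; the mean-zero hypothesis eliminates the trivial eigenvalue at $2\cosh(\log q/2)$. The bilinear sum over $(n,m)$ is then controlled by the Dirichlet-kernel bound $\bigl|\tfrac{1}{T}\sum_{n=1}^T\cos(2n\alpha)\bigr|\lesssim\tfrac{1}{T|\sin\alpha|}$, with $\alpha$ running over sums and differences $\theta\pm\theta'$ of spectral parameters. The expander condition (EXP) provides a gap of width $\gtrsim \beta$ (in the $\theta$-parametrization) between the trivial eigenvalue and the rest of the spectrum, which cuts off the $1/|\sin\alpha|$ singularity at scale $\beta$; combining with a Parseval-type step that absorbs $\sum_i \|\Pi_i a\|_2^2 \leq \|a\|_2^2$ produces the main term $\|a\|_2^2/(\beta^2 T)$. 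The hardest step will be this bulk spectral estimate: the Chebyshev combinatorial weights over $(n,m)$, the Dirichlet cancellations, and the spectral-gap cutoff must combine consistently, and one must check carefully that the local tree approximation of the kernel on $V^{\mathrm{tree}}$ is compatible with the global $T_q$-spectral decomposition on $G_k$.
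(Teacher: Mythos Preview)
Your boundary term is reasonable, but the bulk spectral argument has a genuine gap. After your Chebyshev reduction, the exact identity in the $T_q$-eigenbasis $\{\psi_i\}$ is
\[
\left\|\frac{1}{T}\sum_{n=1}^T P_{2n}\,a\,P_{2n}\right\|_{HS}^2
=\sum_{i,j}\,\bigl|\langle\psi_i,a\psi_j\rangle\bigr|^2
\left|\frac{1}{T}\sum_{n=1}^T\cos(2n\theta_i)\cos(2n\theta_j)\right|^2.
\]
The Dirichlet factor $\frac{1}{T}\sum_n\cos(2n\alpha)$ is small only when $\alpha=\theta_i\pm\theta_j$ is bounded away from $0$ and $\pi$. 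The expander hypothesis (EXP) gives a gap between the \emph{trivial} eigenvalue and the rest of the spectrum; it says nothing about $|\theta_i-\theta_j|$ for two nontrivial eigenvalues, which can be arbitrarily small or zero. In particular the diagonal $i=j$ survives with weight $\geq 0.3$ (this is exactly Lemma~\ref{l:unitarity}), and the resulting lower bound $\gtrsim\sum_i|\langle\psi_i,a\psi_i\rangle|^2$ is precisely the quantity the whole argument is meant to control --- so the estimate is circular. Decomposing $a=\sum_i\Pi_i a$ as a \emph{function} does not help either: the multiplication operator by $\Pi_i a$ does not commute with $T_q$, so the decomposition does not diagonalize anything in the trace. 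You yourself flag the tension between the local tree approximation and the global spectral decomposition; that tension is real and is where the argument breaks.

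The paper's proof uses the spectral gap in a different and essential way. It writes the kernel of $P_{2n}aP_{2n}$ explicitly via the propagation formula (Lemma~\ref{estimate}), decomposes it into pieces $\tilde S_{2j,2k,2l}$ indexed by tree distances, and then passes to the \emph{arc graph} with its non-backtracking transfer operator $T_q'$. The key step (Lemmas~\ref{l:edgegap}--\ref{l:Tq sum}) rewrites the relevant sums as $(T_q')^{k+j-l}$ applied to $Ba$, and the expander gap is used as $\|(T_q')^m\|\lesssim e^{-\beta m}$ on $L^2$ of the arc graph --- i.e.\ the gap acts on the \emph{observable} $a$, not on differences of eigenfunction parameters. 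This is the missing idea: you need a mechanism that converts the mean-zero condition on $a$ into exponential decay of an iterated averaging, and the non-backtracking operator is what provides it.
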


To prove the theorem, we first need the following lemma, which will be used as well in the case of the sphere in Section~\ref{sphere}.
\begin{lemma}\label{l:unitarity}
 For every $\theta \in [0,\pi]$, $T \geq 10$,
 $$ \left| \frac1T \sum_{n=1}^T \cos(n\theta)^2 \right| \geq 0.3$$ 
\end{lemma}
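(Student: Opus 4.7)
The plan is a direct trigonometric computation. First, using the identity $\cos^2(n\theta) = \frac{1}{2}(1 + \cos(2n\theta))$ and summing the resulting geometric series $\sum_{n=1}^T e^{2in\theta}$, I would obtain the closed-form identity
\[
\frac{1}{T}\sum_{n=1}^T \cos^2(n\theta) \;=\; \frac{1}{2} - \frac{1}{4T} + \frac{\sin((2T+1)\theta)}{4T\,\sin\theta},
\]
where the last term is interpreted as its limiting value $(2T+1)/(4T)$ at $\theta \in \{0,\pi\}$. The claim then reduces to showing that this error term is at least $-\frac{1}{5} + \frac{1}{4T}$ for every $\theta \in [0,\pi]$.

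Since $\cos(n(\pi-\theta)) = (-1)^n\cos(n\theta)$, the sum is invariant under $\theta \mapsto \pi - \theta$, so it suffices to treat $\theta \in [0,\pi/2]$, which I would split into two cases. In the first case, $\theta \in [0, \pi/(2T+1)]$, one has $(2T+1)\theta \in [0,\pi]$, so $\sin((2T+1)\theta) \geq 0$ and the error term is non-negative; the average is then at least $\frac{1}{2} - \frac{1}{4T} \geq 0.475$ for $T \geq 10$.

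In the second case, $\theta \in [\pi/(2T+1), \pi/2]$, I would bound $|\sin((2T+1)\theta)| \leq 1$ and $\sin\theta \geq \sin(\pi/(2T+1))$. For $T \geq 10$, the argument $\pi/(2T+1) \leq \pi/21$ is small enough that the cubic Taylor estimate $\sin(x) \geq x - x^3/6$ yields $\sin(\pi/(2T+1)) \geq 0.996\,\pi/(2T+1)$, and therefore the error term is bounded in absolute value by $(2T+1)/(4 \cdot 0.996\,\pi\,T) < 0.168$ uniformly for $T \geq 10$. Combining gives $\frac{1}{T}\sum_{n=1}^T \cos^2(n\theta) \geq 0.5 - 0.025 - 0.168 > 0.3$, as required.

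The only real obstacle is numerical: at $T = 10$ the margin above $0.3$ is only about $0.007$, so one genuinely needs a bound on $\sin$ at small arguments slightly sharper than Jordan's inequality $\sin x \geq (2/\pi)x$ (which would yield only $\approx 0.21$ and miss the target). For larger $T$ this is not a concern, as the lower bound comfortably tends to $\frac{1}{2} - \frac{1}{2\pi} \approx 0.341$.
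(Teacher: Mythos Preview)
Your proof is correct and follows essentially the same route as the paper's: both derive the identity $\frac{1}{T}S_T = \frac{2T-1}{4T} + \frac{\sin((2T+1)\theta)}{4T\sin\theta}$, reduce by symmetry to $\theta\in[0,\pi/2]$, observe the error term is nonnegative on $[0,\pi/(2T+1)]$, and bound it by $-1/(4T\sin(\pi/(2T+1)))$ otherwise. The paper leaves the final numerical check as a ``routine calculation,'' whereas you spell it out via the cubic Taylor estimate for $\sin$; your remark that Jordan's inequality alone would not suffice at $T=10$ is a nice observation.
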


\begin{proof}[Proof of Lemma~\ref{l:unitarity}] 
We write
$$ S_T := \sum_{n=1}^T \cos(n\theta)^2. $$
Then
\begin{align*}
\frac1T S_T &= \frac1T \sum_{n=1}^T \left( \frac12 + \frac14 e^{2in\theta} + \frac14 e^{-2in\theta} \right) \\
&= \frac{2T - 1}{4T} + \frac1{4T} \sum_{n=-T}^T e^{2in\theta} \\
&= \frac{2T - 1}{4T} + \frac{\sin[(2T+1)\theta]}{4T \sin\theta}.
\end{align*}
Now 
$$ \min_{\theta \in [0,\pi]} \frac{\sin[(2T+1)\theta]}{4T \sin\theta} = \min_{\theta \in [0,\pi/2]} \frac{\sin[(2T+1)\theta]}{4T \sin\theta} \geq -\frac1{4T\sin(\pi/(2T+1))}.$$
Indeed: $|\sin\theta|$ is monotone increasing in $[0,\pi/2]$ and $\sin(2T+1)\theta > 0$ for $0 < \theta < \pi/(2T+1).$

Hence $$ \frac1T S_T \geq \frac1{4T} \left( 2T-1 - \frac1{\sin(\pi/(2T+1))} \right).$$
A routine calculation shows this is $\geq 0.3$ for $T\geq 10$.
\end{proof}

We now show how Theorem~\ref{t:main} follows from the central estimate Proposition~\ref{p:egorov_graphs}; the proof of Proposition~\ref{p:egorov_graphs} will be the subject of Section~\ref{main estimate graphs}.
\begin{proof}[Proof of Theorem \ref{t:main} from Proposition~\ref{p:egorov_graphs}]
Recall that 
$$P_{2n}(T_q/2)\psi_j^{(k)} = \cos(2n\theta_{j,k})\psi_j^{(k)}$$
for $\lambda_j^{(k)}=2\cos\theta_{j,k}$.  
Thus we use Lemma~\ref{l:unitarity} to estimate
 \begin{align*}
  \frac1{k} &\sum_{j=1}^k \left| \la \psi^{(k)}_j, a_k\psi^{(k)}_j\ra\right|^2 \\
  & \lesssim  \frac1{k} \sum_{j=1}^k \left|\frac{1}{T} \sum_{n=1}^T \cos^2 (2n\theta_{j,k})\right|^2\left| \la \psi^{(k)}_j, a_k\psi^{(k)}_j\ra\right|^2 \\
  &\lesssim \frac1{k} \sum_{j=1}^k \left| \left\la \psi^{(k)}_j, \frac1T \sum_{n=1}^T P_{2n}(T_q/2) a_k P_{2n}(T_q/2)\psi^{(k)}_j\right\ra\right|^2 \\
  &\lesssim \frac1{k} \left\| \frac1T \sum_{n=1}^T P_{2n}(T_q/2) a_k P_{2n}(T_q/2) \right\|_{HS}^2 \\
  &\lesssim \frac{\|a\|_{2}^2}{k T} + q^{8T} \frac{|\{x\in G_k, \rho(x) \leq 4T\}|}{k} \|a\|_\infty^2,
 \end{align*}  
 with the implicit constant depending only on $q$, as in Proposition~\ref{p:egorov_graphs}.
We then take $T = T_k = \min\left\{ \frac{R_k}{4}, -\frac{\log \alpha_k}{16 \log q} \right\}$.
\end{proof}

\section{Proof of the main estimate for graphs}\label{main estimate graphs}

Let $\mathfrak{X}$ be the $q+1$ regular tree, and $G = \mathfrak{X}/\Gamma$ be any finite $q+1$-regular graph (where $\Gamma$ is a discrete subroup of the group of automorphisms of the tree). We will denote by $d(x,y)$ the geodesic distance on the tree. We will also assume that when the symbol $\lesssim$ is used without further indication, the implicit constant depends only on $q$.

Proposition~\ref{p:egorov_graphs} (and later Proposition~\ref{p:egorov}) is based on the following propagation lemma; see  \cite{BL} or \cite{jointQmodes} for a proof.
\begin{lemma}\label{estimate} 
Fix a point $0 \in \mathfrak{X}$ and write $|x| = d(0,x)$ for any $x \in \mathfrak{X}$.
Let $\delta_0$ be the $\delta$-function supported at $0$, and $n$ a positive even integer.  Then
\begin{eqnarray*}
P_n(T_q/2)\delta_0(x) & = & \left\{ \begin{array}{ccc} 0  & \quad & |x|  \text{ odd } \quad \text{or} \quad |x|>n\\ \frac{1-q}{2q^{n/2}} & \quad & |x|<n \quad \text{and} \quad |x| \text{ even } \\ \frac{1}{2q^{n/2}} & \quad & |x| = n \end{array}\right.
\end{eqnarray*}
\end{lemma}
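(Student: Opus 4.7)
The plan is to prove the formula by induction on $n$ via the Chebyshev three-term recurrence $P_{n+1}(x) = 2xP_n(x) - P_{n-1}(x)$. Setting $\phi_n := P_n(T_q/2)\delta_0$, the recurrence becomes
\[
\phi_{n+1} = T_q\phi_n - \phi_{n-1}, \qquad \phi_0 = \delta_0, \quad \phi_1 = \tfrac{1}{2\sqrt q}\mathbf{1}_{\{|x|=1\}}.
\]
Since $T_q$ commutes with the stabiliser of $0$ in $\operatorname{Aut}(\mathfrak{X})$ and $\delta_0$ is fixed by it, each $\phi_n$ is radial, so it is determined by its values on the spheres $\{|x|=k\}$. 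A neighbour count on the tree (one parent, $q$ children) shows that any radial $g=g(|x|)$ satisfies
\[
(T_q g)(k) = \tfrac{1}{\sqrt q}\bigl(g(k-1)+q\,g(k+1)\bigr)\quad(k\ge 1), \qquad (T_q g)(0) = \tfrac{q+1}{\sqrt q}\,g(1).
\]
To make the induction close up I would prove in parallel the analogous formula for odd $n$ (support on odd $|x|$), since the recurrence mixes consecutive $\phi_n$; the parity claim ``$\phi_n$ is supported on $k\equiv n\pmod 2$'' then propagates automatically, because $T_q$ flips the parity of the support of a radial function.

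The base cases $n=0,1,2$ are direct: from $P_2 = 2x^2-1$ one computes $\phi_2 = \tfrac{1-q}{2q}\delta_0 + \tfrac{1}{2q}\mathbf{1}_{\{|x|=2\}}$, exactly as in the claim. For the inductive step, assume the formula for $\phi_{n-1}$ and $\phi_{n-2}$ and evaluate $\phi_n(k) = (T_q\phi_{n-1})(k) - \phi_{n-2}(k)$ in four regimes. For $k>n$ every relevant input vanishes. For $k=n$ only the ``tip'' $\phi_{n-1}(n-1) = \tfrac{1}{2q^{(n-1)/2}}$ contributes, yielding $\phi_n(n) = \tfrac{1}{\sqrt q}\cdot \tfrac{1}{2q^{(n-1)/2}} = \tfrac{1}{2q^{n/2}}$. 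For $0<k<n$ of the correct parity, both $\phi_{n-1}(k\pm 1)$ are ``interior'' values $\tfrac{1-q}{2q^{(n-1)/2}}$ (one of them replaced by the tip $\tfrac{1}{2q^{(n-1)/2}}$ when $k=n-2$); combining with $\phi_{n-2}(k) = \tfrac{1-q}{2q^{(n-2)/2}} = \tfrac{q(1-q)}{2q^{n/2}}$ and using the factorisation $1-q^2 = (1-q)(1+q)$ telescopes everything to the desired $\tfrac{1-q}{2q^{n/2}}$. Finally, $k=0$ (which arises only when $n$ is even) uses the separate boundary formula $(T_q\phi_{n-1})(0) = \tfrac{q+1}{\sqrt q}\phi_{n-1}(1)$ and simplifies analogously.

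The hard part will not be any deep obstruction---the proof is careful bookkeeping---but rather checking that the three boundary contributions (the root $k=0$, the propagation front $k=n$, and the ``almost-front'' $k=n-2$ at which the tip value of $\phi_{n-1}$ feeds into what is an interior value of $\phi_n$) each telescope into the same two constants $\tfrac{1-q}{2q^{n/2}}$ and $\tfrac{1}{2q^{n/2}}$, so that the formula is preserved by the recurrence. One could alternatively derive the lemma from the spherical-function decomposition on $\mathfrak{X}$ or from the generating function $\sum_n P_n(x)z^n = (1-xz)/(1-2xz+z^2)$ applied to $\delta_0$, but in both approaches one still has to reconcile an interior eigenvalue equation with the special behaviour at $0$, so the direct induction above seems the most economical route.
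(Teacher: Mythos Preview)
Your inductive argument via the Chebyshev recurrence is correct, and the case analysis (tip $k=n$, near-tip $k=n-2$, generic interior $0<k<n-2$, root $k=0$) telescopes exactly as you describe; the parallel odd-$n$ formula you need is identical in form and the verification goes through without surprises. Note, however, that the paper does not actually supply a proof of this lemma: it simply cites \cite{BL} and \cite{jointQmodes}. Your self-contained induction is therefore not being compared against an in-paper argument but rather fills in what the authors left to the references; the approach you take is the standard one and matches what one finds in those sources.
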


In what follows, we will denote simply by $P_n$ the operator $P_n(T_q/2)$. The set $D$ will be a fundamental domain of $G$ on the tree $\mathfrak{X}$. We see any function $a$ on the graph $G$ as a $\Gamma$-invariant function on the tree.
We denote by $K_n(x,y)$ the kernel of the operator $P_n a P_n$ on the tree. This kernel satisfies the invariance relation
$$ \forall \gamma \in \Gamma \quad K_n(\gamma \cdot x, \gamma \cdot y) = K_n(x,y) $$
and it therefore defines an operator $T_n$ on the graph $G$, whose kernel is given by 
$$ \tilde{K}_n(x,y) = \sum_{\gamma \in \Gamma} K_n(x,\gamma \cdot y) $$
We denote by $K_T(x,y)$ the kernel of $A_T = \frac1T \sum_{n=0}^T P_{2n} a P_{2n}$, and by $\tilde{K}_T$ the corresponding kernel on the graph. Let us first give an explicit expression of the kernel $K_T$. For this purpose we define the sets
\[E_{j,k} = E_{j,k}(x,y) = \{ z: d(x,z) = j, d(y,z) = k \}.\] 
We find from Lemma~\ref{estimate} that
\begin{align}\label{e:K2n}
 K_{2n}(x,y) &= \frac1{4q^{2n}} \sum_{z\in E_{2n,2n}} a(z) \\
 &\quad + \frac{(1-q)}{4q^{2n}} \sum_{j=0}^{n-1}\left( \sum_{z\in E_{2j,2n}} a(z) + \sum_{z\in E_{2n,2j}} a(z) \right) \nonumber \\
 &\quad + \frac{(1-q)^2}{4q^{2n}} \sum_{j,k=0}^{n -1} \sum_{z\in E_{2j,2k}} a(z). \nonumber
\end{align}
Note that this kernel is equal to $0$ whenever $d(x,y)$ is odd or $d(x,y) > 4n$. We then have $K_T = \frac1T \sum_{n=0}^T K_{2n}$.

We want to evaluate the Hilbert-Schmidt norm of this operator on the graph, that is
$$ \norm{A_T}_{HS}^2 = \sum_{x,y \in D} |\tilde{K}_T(x,y)|^2. $$
We first separate the points with small and large radius of injectivity. Define
\begin{equation*}
A_T' f(x) = \left\{
\begin{array}{ll}
   A_T f(x) & \text{if } \rho(x) > 4T \\
   0 & \text{otherwise.}
\end{array}
\right.
\end{equation*}
We then have
\begin{lemma}
$$ \|A_T\|_{HS}^2 \leq \| A_T' \|_{HS}^2 + q^{8T}  |\{x\in D, \rho(x) \leq 4T\}| \| a \|_\infty^2 $$
\end{lemma}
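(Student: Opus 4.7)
The plan is to exploit orthogonality in the Hilbert--Schmidt inner product. The kernel of $A_T'$ is supported on the rows $\{x \in D : \rho(x) > 4T\}$, while the kernel of $A_T - A_T'$ is supported on the complementary rows; since these row supports are disjoint,
\[
\|A_T\|_{HS}^2 = \|A_T'\|_{HS}^2 + \|A_T - A_T'\|_{HS}^2.
\]
It therefore suffices to show $\|A_T - A_T'\|_{HS}^2 \lesssim q^{8T} \, |\{x \in D : \rho(x) \leq 4T\}| \, \|a\|_\infty^2$, and I will do so by proving the stronger row-wise estimate $\sum_{y \in D} |\tilde K_T(x, y)|^2 \lesssim q^{8T} \|a\|_\infty^2$ for every individual $x \in D$, and then summing over $x$ with $\rho(x) \leq 4T$.

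The first ingredient is a uniform pointwise bound on the tree kernel. From Lemma~\ref{estimate} and \eqref{e:K2n}, $K_{2n}(x, z) = 0$ whenever $d(x, z) > 4n$, so $K_T(x, z)$ vanishes for $d(x, z) > 4T$. For $d(x, z) \leq 4T$, I claim $|K_T(x, z)| \lesssim \|a\|_\infty$ uniformly in $x, z$, and $T$. This follows from the explicit formula \eqref{e:K2n} together with the trivial ball bound $|E_{j,k}(x, z)| \leq (q+1) q^{\min(j,k)}$: substituting into the three pieces and summing the resulting geometric series in $j$ and $k$ produces contributions of order $q^{2n}$, which cancel the prefactor $q^{-2n}$ and yield a bound on $|K_{2n}(x, z)|$ independent of $n$; averaging over $n$ preserves this bound.

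To pass from the tree kernel to the graph, write $\tilde K_T(x, y) = \sum_{\gamma \in \Gamma} K_T(x, \gamma \cdot y)$ and observe that only summands with $\gamma \cdot y \in B(x, 4T)$ are nonzero, so at most $|B(x, 4T)| \lesssim q^{4T}$ of them are. Cauchy--Schwarz on this finite sum gives
\[
|\tilde K_T(x, y)|^2 \leq |B(x, 4T)| \sum_{\gamma \in \Gamma} |K_T(x, \gamma \cdot y)|^2.
\]
Summing over $y \in D$, and using that $\{\gamma \cdot y : y \in D,\, \gamma \in \Gamma\}$ parametrizes $\tree$,
\[
\sum_{y \in D} |\tilde K_T(x, y)|^2 \leq |B(x, 4T)| \sum_{z \in \tree} |K_T(x, z)|^2 \leq |B(x, 4T)|^2 \|K_T(x, \cdot)\|_\infty^2 \lesssim q^{8T} \|a\|_\infty^2,
\]
which is the desired row bound. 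The only nontrivial point is the uniform pointwise estimate on $|K_T(x, z)|$; everything else is elementary tree combinatorics. Note that this lemma is deliberately crude --- no expander or injectivity-radius hypothesis enters --- and it is useful precisely because it will only be applied on the relatively small set of vertices with $\rho(x) \leq 4T$, whose total mass will eventually be controlled using hypothesis (BST).
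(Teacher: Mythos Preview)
Your proof is correct and follows essentially the same approach as the paper: split off the rows with $\rho(x)\leq 4T$ (the paper leaves the disjoint-row-support orthogonality implicit, while you state it), apply Cauchy--Schwarz to the $\Gamma$-sum of at most $q^{4T}$ terms, use the uniform pointwise bound $|K_{2n}(x,z)|\leq \|a\|_\infty$, and count $\lesssim q^{4T}$ points in the $4T$-ball. The only difference is that you spell out the combinatorial justification for the pointwise kernel bound, whereas the paper simply asserts it ``is clear from \eqref{e:K2n}''.
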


\begin{proof}
We have
$$
\| A_T - A'_T\|_{HS}^2 = \frac1{T^2} \sum_{\substack{x,y\in D\\ \rho(x) \leq 4T}} \left|  \sum_{\gamma\in \Gamma} \sum_{n=0}^T K_{2n}(x,\gamma\cdot y) \right|^2 
$$
and there are at most $q^{4T}$ terms in the sum over $\Gamma$. So we have by Cauchy-Schwarz inequality, and using also the fact that $K_{2n}(x,y) = 0$ when $d(x,y) > 4T \geq 4n$, 
\begin{align*}
 \| A_T - A'_T\|_{HS}^2 
 &\leq \frac1{T^2} q^{4T} \sum_{\substack{x,y\in D\\ \rho(x) \leq 4T}}\sum_{\gamma \in \Gamma}  \left| \sum_{n=0}^T K_{2n}(x,y) \right|^2\\
 &\leq \frac1{T^2} q^{4T} \sum_{\substack{x\in D\\ \rho(x) \leq 4T}}\sum_{\substack{y \in \mathfrak{X} \\ d(x,y) \leq 4T}}  \left| \sum_{n=0}^T K_{2n}(x,y) \right|^2.
 \end{align*}
We then use the fact that $\sup_{x,y} K_{2n}(x,y) \leq \| a \|_\infty$ as is clear from \eqref{e:K2n} to obtain
$$ \| A_T - A'_T\|_{HS}^2 \leq q^{8T}  |\{x\in D, \rho(x) \leq 4T\}| \| a \|_\infty^2$$
\end{proof}

We can now restrict our attention to the operator $A_T'$. We write 
$$ A_T' = \frac1T \sum_{n=0}^T \frac1{q^{2n}} \sum_{l=0}^{2n}\sum_{j,k=0}^n c(j,k) \tilde{S}_{2j,2k,2l}, $$
or, interchanging the sums in $l$ and $n$, which will be useful later:
\begin{equation}\label{e:AT}
 A_T' = \frac1T \sum_{l=0}^{2T} \sum_{n=\lceil l/2 \rceil}^{T} \frac1{q^{2n}} \sum_{j,k=0}^n c(j,k) \tilde{S}_{2j,2k,2l},
\end{equation}
where $\tilde{S}_{2j,2k,2l}$ is defined as follows. We first define the operator $S_{2j,2k,2l}$, acting on functions of the tree, by its kernel
$$ [S_{2j,2k,2l}](x,y) = \mathbf{1}_{\{d(x,y)=2l \}} \sum_{z\in E_{2j,2k}(x,y)} a(z).  $$
It gives an operator on the graph, that we restrict to the points $x$ such that $\rho(x) > 4T$ in order to obtain $\tilde{S}_{2j,2k,2l}$: the kernel on the graph is, for $x,y \in D$
$$  [\tilde S_{2j,2k,2l}](x,y) = \mathbf{1}_{\{\rho(x) > 4T\}}  \sum_{\gamma \in \Gamma}  [S_{2j,2k,2l}](x,\gamma \cdot y). $$
 Note that the constants $c(j,k)$ depend only on $q$.

The proof of Proposition \ref{p:egorov_graphs} then follows from the following estimate:
 \begin{lemma}\label{l:Sop}
$$ \| \tilde{S}_{2j,2k,2l} \|_{HS} \lesssim q^{(k+j)} e^{-\frac{\beta}2 (k+j-l)} \|a \|_2 $$
 \end{lemma}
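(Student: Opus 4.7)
The plan is to combine the injectivity cutoff $\rho(x) > 4T$ built into the definition of $\tilde S_{2j,2k,2l}$ with the spectral gap (EXP). Since $2l \leq 4T$ and the operator is supported on $x$ with $\rho(x) > 4T$, the ball $B(x, 2l) \subset \mathfrak X$ embeds injectively into $G_k$; consequently, for each $y \in D$ the defining sum $\sum_\gamma \mathbf{1}_{d(x, \gamma y) = 2l}$ has at most one nonzero term, and a change of variables collapses the HS sum on the graph to the tree sum
\[
\|\tilde S_{2j,2k,2l}\|_{HS}^2 \;=\; \sum_{\substack{x \in D \\ \rho(x) > 4T}} \sum_{\substack{y' \in \mathfrak X \\ d(x, y') = 2l}} |K(x, y')|^2,
\]
where $K(x, y') := \sum_{z \in E_{2j,2k}(x, y')} a(z)$.

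For such a pair, the three geodesics between $x$, $y'$ and any $z$ meet at a canonical Steiner vertex $w$ on the $xy'$-geodesic, at distance $a := l+j-k$ from $x$ and $b := l+k-j$ from $y'$, and $E_{2j,2k}(x,y')$ is precisely the set of points at distance $c := j+k-l$ from $w$ whose first step from $w$ is in a direction other than $v_x$ (toward $x$) or $v_y$ (toward $y'$). Writing $A_c$ for the sphere-of-radius-$c$ operator on the tree and $Q^v_c(w) := \sum a(z)$ over $z$ with $d(w,z)=c$ and first step $v$, one gets
\[
K(x, y') \;=\; A_c\, a(w) \;-\; Q^{v_x}_c(w) \;-\; Q^{v_y}_c(w),
\]
and I would parametrize $(x, y')$ by the tuple $(x, w, v_y, y')$: under injectivity, for each $w$ there are $\lesssim q^a$ admissible $x \in D$ and $\lesssim q^b$ admissible $y'$'s per direction $v_y$.

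The main-term estimate comes from the tree identity $A_c = q^{c/2} U_c(T_q/2)$, with $U_c$ the Chebyshev polynomial of the second kind; together with the (EXP) bound $|U_c(\lambda/2)| \lesssim q^{c/2} e^{-c\beta}$ on $\mathrm{spec}(T_q|_{L^2_0})$, this gives $\|A_c a\|_{L^2(G_k)} \lesssim q^c e^{-c\beta} \|a\|_2$ for $a \in L^2_0$. Multiplied by the volume factor $q^a \cdot q^b = q^{a+b}$ from the Steiner parametrization, this contributes $q^{a+b+2c} e^{-2c\beta}\|a\|_2^2 = q^{2(j+k)} e^{-2c\beta}\|a\|_2^2$ to the HS sum. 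The directional corrections $Q^v_c$ are not polynomials in $T_q$, but I would handle them via the recursion $Q^v_c(w) = A_{c-1}\, a(w_v) - A_{c-2}\, a(w) + Q^v_{c-2}(w)$ (each partial sphere equals the full sphere minus the ``back-direction'' partial sphere at one less level), which telescopes to $Q^v_c(w) = \sum_{i=1}^c (-1)^{i+1} A_{c-i}\,a(P_i)$ with $P_i \in \{w, w_v\}$ alternating; Cauchy--Schwarz in $i$ plus the spectral-gap estimate on each $A_{c-i}\,a$ bounds $\sum_{(w,v)} |Q^v_c(w)|^2$ by $q^{2c}e^{-2c\beta}\|a\|_2^2$ up to a polynomial-in-$c$ factor. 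When combined with the $\lesssim q^{a-1}$ admissible $x \in D$ per direction $v_x$ and the $q^b$ $y'$'s, this correction contribution is also dominated by $q^{2(j+k)}e^{-c\beta}\|a\|_2^2$. Adding the pieces and taking square roots yields the claimed bound.

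The main obstacle is the treatment of the directional partial-sphere operators $Q^v_c$, which do not fit naturally into the $T_q$-functional calculus and must be unpacked via the recursive telescoping identity. One must then verify that the geometric series of spectral-gap bounds dominates the $\mathrm{poly}(c)$ loss from Cauchy--Schwarz, and carefully match the combinatorial factors in the Steiner parametrization (counts of $x \in D$ for a given direction $v_x$, of $y'$ for given $v_y$, and of lifts of $w$ per $\bar w \in G_k$) against the spectral factors.
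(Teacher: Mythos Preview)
Your approach is correct and reaches the stated bound, but the paper takes a cleaner route that sidesteps your recursion on $Q^v_c$ entirely. Rather than staying on the vertex graph and decomposing $K(x,y')$ as a full-sphere term $A_c a(w)$ minus two directional corrections, the paper passes to the \emph{arc graph} $G'$ (vertices $=$ directed edges of $G$) and the non-backtracking transfer operator $T'_q$. The point is that the directional partial-sphere sum is \emph{exactly} $q^{c}\,(T'_q)^{c}(Ba)$ evaluated at the outgoing edge at $w$ (Lemma~\ref{l:Tq sum}), so the whole kernel is captured by a single power of $T'_q$ with no correction terms to unpack. The spectral gap for $T'_q$ is then read off from that of $T_q$ via the $2\times 2$ transfer matrix $\left(\begin{smallmatrix}0 & -1/q\\ 1 & \lambda/\sqrt q\end{smallmatrix}\right)$ and its Chebyshev-$U_n$ powers (Lemma~\ref{l:edgegap}), yielding $\|(T'_q)^{c}\|\lesssim_q e^{-\beta c}$ directly. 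Your method, by contrast, telescopes $Q^v_c$ into a sum of $c$ terms $A_{c-i}a(P_i)$ and uses Cauchy--Schwarz, incurring a polynomial-in-$c$ loss that must be absorbed into the exponential; since $\sup_{c}c\,e^{-\beta c}\asymp\beta^{-1}$, you pick up an extra $\beta^{-1}$ in the implicit constant, harmless for the downstream application but slightly weaker than the paper's bound, whose constant depends only on $q$. What your argument buys is self-containment on the vertex graph: no auxiliary arc space is needed, only the standard functional calculus of $T_q$. One minor correction: the identity $A_c=q^{c/2}U_c(T_q/2)$ is not exact on a $(q{+}1)$-regular tree because of the anomalous relation $A_1^2=A_2+(q{+}1)A_0$; the correct formula for $c\ge 2$ is $A_c=q^{c/2}\bigl(U_c-q^{-1}U_{c-2}\bigr)(T_q/2)$, but this does not affect your spectral estimate.
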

 Indeed, let us first notice that for $l\neq l'$, the operators $\tilde{S}_{2j,2k,2l}$ and $\tilde{S}_{2j',2k',2l'}$ are orthogonal with respect to the Hilbert-Schmidt norm. We deduce from this fact and the expression \eqref{e:AT} that
 \begin{align*}
\|A_T' \|^2_{HS} &= \frac1{T^2} \sum_{l=0}^{2T} \left\| \sum_{n=\lceil l/2 \rceil}^T \frac1{q^{2n}} \sum_{j,k=0}^n   c(j,k) \tilde S_{2j,2k,2l} \right\|_{HS}^2 \\
&\lesssim \frac1{T^2} \sum_{l=0}^{2T} \left( \sum_{n=\lceil l/2 \rceil}^T \frac1{q^{2n}} \sum_{j,k=0}^n \| \tilde{S}_{2j,2k,2l} \|_{HS}  \right)^2 \\
&\lesssim \frac1{T^2} \sum_{l=0}^{2T} \left( \sum_{n=\lceil l/2 \rceil}^T \frac1{q^{2n}} \sum_{j,k=0}^n q^{(k+j)} e^{-\frac{\beta}2 (k+j-l)} \|a\|_2 \right)^2 \\
&\lesssim \frac1{T^2} \sum_{l=0}^{2T}  \left( \sum_{n=\lceil l/2 \rceil}^T e^{-\beta(n-l/2)} \|a\|_2 \right)^2 \\
&\lesssim \frac{ \|a\|^2_2}{\beta^2 T}.
\end{align*}

Let us now prove Lemma \ref{l:Sop}. Recall that the kernel of $S_{2j,2k,2l}$ on the tree is given by
 $$ \mathbf{1}_{\{d(x,y)=2l \}} \sum_{z\in E_{2j,2k}(x,y)} a(z).  $$
In order to be able to work with this expression, we will consider the \emph{arc graph}, defined as follows. For every directed edge $a$ of $G$ we denote by $a^+$ the target of $a$, by $a^-$ the source of $a$ and by $\bar a$ the reversal of $a$. The arc graph $G'$ is then the directed graph whose vertices are the directed edges, or arcs of $G$, and such that there is an edge from the arc $a$ to the arc $b$ when $a^+ = b^-$ and $a \neq \bar b$. We will also see the arc graph $G'$ as a quotient of the arc tree $\mathfrak{X}'$ by the group $\Gamma$ acting on edges.

We then define the averaging operator (or normalized adjacency matrix) $T_q'$ on $G'$ for any function $F$ on $G'$ by
$$T_q' F(a) = \frac1q \sum_{\substack{b^- = a^+ \\ b \neq \bar a}} F(b).$$
We also define the maps $B,E : L^2(G) \to L^2(G')$  by
\begin{equation}\label{e:beginend}
 Bf(a) = f(a^-), Ef(a) = f(a^+) 
\end{equation}

\begin{lemma}\label{l:edgegap}
For every $k \geq 1$, we have
$$ \| (T_q')^k \| \lesssim e^{-\beta k}$$
with an implicit constant depending only on $q$.
\end{lemma}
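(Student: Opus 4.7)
The plan is to reduce the bound on the non-backtracking operator $T_q'$ to the spectral gap assumption (EXP) for $T_q$ via an Ihara--Bass-style decomposition of $L^2(G')$ into $T_q'$-invariant subspaces. The central algebraic identity to establish first is that, for any $f \colon V_k \to \C$,
\[ T_q'(Bf) = Ef, \qquad T_q'(Ef) = \frac{1}{\sqrt q}\,E(T_q f) - \frac{1}{q}\,Bf, \]
both of which follow by unfolding the sum in the definition of $T_q'$ (the first because every $b$ with $b^- = a^+$ satisfies $b^- = a^+$, so the sum just counts the $q$ remaining arcs; the second because $\sum_{b^- = a^+} f(b^+) = \sqrt q\, T_q f(a^+)$ and one subtracts the reversed-arc term). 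In particular, if $T_q f = \lambda f$, then $W_f = \operatorname{span}(Bf, Ef)$ is a $2$-dimensional $T_q'$-invariant subspace on which $T_q'$ is represented, in the basis $(Bf, Ef)$, by the matrix $M_\lambda$ with first column $(0,1)$ and second column $(-1/q, \lambda/\sqrt q)$, whose eigenvalues are $\mu_\pm = e^{\pm i\theta}/\sqrt q$ for $\lambda = 2\cos\theta$.

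Next I would decompose $L^2(G')$ globally. Fix an orthonormal $T_q$-eigenbasis $\{f_j\}$ on $G_k$ with eigenvalues $\lambda_j = 2\cos\theta_j$, allowing $\theta_j$ to be purely imaginary in the untempered regime, and set $W_j = \operatorname{span}(Bf_j, Ef_j)$. A short computation using $\langle f_j, f_{j'}\rangle = \delta_{jj'}$ gives $\langle Bf_j, Bf_{j'}\rangle = \langle Ef_j, Ef_{j'}\rangle = (q+1)\delta_{jj'}$ and $\langle Bf_j, Ef_{j'}\rangle = \sqrt q\,\lambda_{j'}\delta_{jj'}$, so the $W_j$ are mutually orthogonal, and their total orthogonal complement $W_\perp$ is the space of $F \in L^2(G')$ with $\sum_{a^- = x} F(a) = 0 = \sum_{a^+ = x} F(a)$ for all $x$. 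For such $F$ the zero-mean property kills the complete sum over $b$ with $b^- = a^+$, leaving $T_q' F(a) = -F(\bar a)/q$; iterating gives $(T_q')^2|_{W_\perp} = q^{-2} I$, so $\|(T_q')^k|_{W_\perp}\| = q^{-k}$, which is $\le e^{-\beta k}$ because (EXP) forces $\beta \le \tfrac{1}{2}\log q$.

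The main step, and the hardest part, will be to bound $\|(T_q'|_{W_j})^k\|$ uniformly in $j$ and $k$. Under (EXP) the eigenvalues $\mu_\pm(j) = e^{\pm i\theta_j}/\sqrt q$ of $M_{\lambda_j}$ satisfy $|\mu_\pm(j)| \le e^{-\beta}$ for every non-trivial $j$. Using Cayley--Hamilton one obtains the closed form $M_{\lambda_j}^k = s_k(\theta_j)\,M_{\lambda_j} - q^{-1} s_{k-1}(\theta_j)\, I$ with $s_k(\theta) = \sin(k\theta)/(q^{(k-1)/2}\sin\theta)$ (extending to purely imaginary $\theta$ via $\sinh$). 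The passage from the non-orthogonal basis $(Bf_j, Ef_j)$ to an orthonormal basis of $W_j$ introduces a change-of-basis factor controlled by the Gram matrix above, whose off-diagonal-to-diagonal ratio $|\lambda_j\sqrt q/(q+1)|$ is strictly less than $1$ for non-trivial eigenvalues under (EXP). The main obstacle is the near-Jordan behaviour at $\lambda_j = \pm 2$, where the two eigenvectors of $M_{\lambda_j}$ coalesce and $s_k(\theta_j) \sim k\cdot q^{-(k-1)/2}$; there $\|M_{\lambda_j}^k\| \lesssim k\,q^{-k/2}$, but since $\beta \le \tfrac{1}{2}\log q$ the polynomial factor $k$ is absorbed into $e^{-\beta k}$ at the cost of a constant depending only on $q$. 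Combining the contributions from $W_\perp$ and from each $W_j$ and invoking the mutual orthogonality yields the desired estimate $\|(T_q')^k\| \lesssim e^{-\beta k}$.
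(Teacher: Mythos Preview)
Your proposal is correct and follows essentially the same Ihara--Bass style argument as the paper: decompose $L^2(G')$ into $\operatorname{Im}(B)+\operatorname{Im}(E)$ and its orthogonal complement, show $T_q'$ acts by $F\mapsto -q^{-1}F(\bar\cdot)$ on the latter, and on each $2$-dimensional block $W_j=\operatorname{span}(Bf_j,Ef_j)$ represent $T_q'$ by the same matrix $\begin{psmallmatrix}0 & -1/q\\ 1 & \lambda/\sqrt q\end{psmallmatrix}$ and bound its powers via Chebyshev-type identities. The only cosmetic difference is that the paper handles the non-orthonormality of $(Bf_j,Ef_j)$ by a fixed conjugation $\operatorname{diag}(q^{1/4},q^{-1/4})$ (independent of $\lambda_j$) and then writes $M_\lambda^n$ explicitly in terms of $U_n$, whereas you invoke Cayley--Hamilton and the Gram matrix; the paper's conjugation trick is a bit cleaner precisely because it avoids the $\lambda_j$-dependent condition-number discussion.
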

\begin{proof}
The idea is to find an orthonormal basis in which the powers of $T_q'$ are simple to study. The space $L^2(G')$ can be divided into the direct sum of $\text{Im}(B)\oplus\text{Im}(E)$ and its orthogonal complement, the latter being the space of functions $F$ such that for any $v \in G$
$$ \sum_{a^- = v} F(a) = \sum_{a^+=v} F(a) = 0.$$
On this space the action of $T_q'$ is given by
$$ T_q'F(a) = - \frac{1}{q} F(\bar a). $$

We can then decompose $\text{Im}(B)\oplus\text{Im}(E)$ using an orthonormal basis of $T_q$. For every eigenfunction $w \in L^2(G)$ with $T_q$-eigenvalue $\lambda$, the space $\C(Bw) + \C(Ew)$ is stable under $T_q'$ and the action of $T_q'$ is given by the matrix
\begin{equation*}
 \begin{pmatrix}
  0 & -\frac{1}{q} \\
  1 & \frac{\lambda}{\sqrt{q}}
 \end{pmatrix}
\end{equation*}
Notice that we have the conjugation
\begin{equation*}
  \begin{pmatrix}
  q^{1/4} & 0 \\
  0 & q^{-1/4}
 \end{pmatrix}
  \begin{pmatrix}
  0 & -\frac{1}{q} \\
  1 & \frac{\lambda}{\sqrt{q}}
 \end{pmatrix}
  \begin{pmatrix}
  q^{-1/4} & 0 \\
  0 & q^{1/4}
 \end{pmatrix} = 
  \frac1{\sqrt{q}} \begin{pmatrix}
  0 & -1 \\
  1 & \lambda
 \end{pmatrix},
\end{equation*}
so, up to a constant depending only on $q$ it is enough to study the matrix on the right-hand side of the previous equality.
We have
\begin{equation}\label{e:chebyshevmatrix}
q^{-n/2}
 \begin{pmatrix}
  0 & -1 \\
  1 & \lambda
 \end{pmatrix}^n
 = q^{-n/2}
  \begin{pmatrix}
  -U_{n-2}(\lambda/2) & -U_{n-1}(\lambda/2) \\
  U_{n-1}(\lambda/2) & U_n(\lambda/2)
 \end{pmatrix}
\end{equation}
where the $U_n$ are the Chebyshev polynomials of the second kind, $$U_n(\cosh\theta) = \sinh[(n+1)\theta]/\sinh\theta.$$ Because $|\lambda| \leq 2\cosh\left(\frac{\log q}{2} -\beta \right)$ the coefficients of \eqref{e:chebyshevmatrix} are bounded in absolute value by $e^{-\beta n}$ up to a constant depending only on $q$.
\end{proof}

We can now rewrite the kernel of $S_{2j,2k,2l}$ using the operator $T'_q$ with the help of the following lemma.

  \begin{lemma}\label{l:Tq sum}
  Let $ 0 \leq j,k \leq n$.
 Let $x \in D$ and $y \in \mathfrak{X}$ such that $l = d(x,y)/2$ satisfies $|k-j| \leq l \leq k+j$. 
 Denote by $w$ the vertex of the segment $[x,y]$ such that $d(x,w) = l - (k-j)$ and $d(y,w) = l + (k-j)$ in the case $k\geq j$, or $d(x,w) = l + (k-j)$ and $d(y,w) = l - (k-j)$ in the case $j \geq k$. Then
  \begin{equation}\label{e:vertextoedge}
  \sum_{z\in E_{2j,2k}} a(z) = \sideset{}{'}\sum_{e^- = w} q^{k+j-l} (T'_q)^{k+j-l} B a (e)
  \end{equation}
  where the sum on the right-hand side runs over the edges which contain only one vertex on the segment $[x,y]$.
 \end{lemma}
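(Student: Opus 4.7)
The plan is a direct combinatorial verification, in two steps: first use the tree structure of $\mathfrak{X}$ to pin down the location of each $z \in E_{2j,2k}(x,y)$ relative to the segment $[x,y]$, then recast the resulting sum as an iterate of the non-backtracking operator $T'_q$.

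For the first step, I would exploit that in a tree every vertex $z$ has a unique projection $w_z$ onto the geodesic $[x,y]$: both geodesics $[x,z]$ and $[y,z]$ pass through $w_z$, so $d(x,z)=d(x,w_z)+d(w_z,z)$ and $d(y,z)=d(y,w_z)+d(w_z,z)$. Combined with $d(x,w_z)+d(y,w_z)=2l$ and the assumptions $d(x,z)=2j$, $d(y,z)=2k$, this linear system has the unique solution
\[ d(w_z,z)=k+j-l,\qquad d(x,w_z)=l-(k-j),\qquad d(y,w_z)=l+(k-j) \]
in the case $k\ge j$ (the case $j\ge k$ is symmetric). Thus $w_z$ is the \emph{same} vertex for every $z\in E_{2j,2k}$, and it coincides with the $w$ appearing in the lemma. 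The condition $w_z=w$ is equivalent to the geodesic $[w,z]$ starting with an edge $e$ whose other endpoint $e^+$ lies off $[x,y]$, which are precisely the edges indexing the primed sum.

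For the second step, I would first note the identity $T'_q Ba = Ea$, which follows from the definition of $T'_q$: the $q$ non-backtracking successors $f$ of an edge $e$ all satisfy $f^-=e^+$, so $Ba(f)=a(e^+)$ for each, and averaging yields $Ea(e)$. An easy induction on $m$ then shows that $q^{m-1}(T'_q)^{m-1}Ea(e)$ equals the unnormalized sum of $a$ over the $q^{m-1}$ vertices at distance $m$ from $e^-$ reached by non-backtracking paths that start with $e$. Splitting $\sum_{z\in E_{2j,2k}}a(z)$ by the first edge of the geodesic from $w$ to $z$ (an off-segment edge with $e^-=w$), and applying this identity with $m=k+j-l$, converts the geometric sum into $\sideset{}{'}\sum_{e^-=w} q^{k+j-l}(T'_q)^{k+j-l} Ba(e)$, which is the desired formula.

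The only real content is the tree-projection argument of Step 1; Step 2 is routine bookkeeping of non-backtracking walks, and I do not expect any serious obstacle. The whole identity is essentially the statement that summing $a$ over a ``sphere slice'' in the tree factors geometrically through its unique foot on the segment $[x,y]$ and then through a finite number of iterates of the non-backtracking transfer operator.
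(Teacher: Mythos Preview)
The paper does not supply a proof of this lemma; it is stated as an elementary rewriting device and then used immediately in the proof of Lemma~\ref{l:Sop}. Your two-step plan---first use the unique-projection property of trees to show that every $z\in E_{2j,2k}(x,y)$ has the same foot $w$ on $[x,y]$ and sits at distance $k+j-l$ from it in an off-segment direction, then recognise the resulting sum as an iterate of the non-backtracking transfer operator applied to $Ba$---is precisely the natural direct verification, and it is correct.

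One small bookkeeping remark: if you carry your own Step~2 through exactly, the identity $q^{m-1}(T'_q)^{m-1}Ea(e)=\sum_{d(v,e^-)=m,\ \text{dir }e}a(v)$ together with $Ea=T'_qBa$ yields $\sideset{}{'}\sum_{e^-=w} q^{m-1}(T'_q)^{m}Ba(e)$ with $m=k+j-l$, which differs from the displayed formula by a factor of $q$; and the degenerate case $m=0$, where $E_{2j,2k}=\{w\}$ lies \emph{on} the segment, does not fit the off-segment description at all. Neither discrepancy matters for the only application of the lemma (the bound $\|\tilde S_{2j,2k,2l}\|_{HS}\lesssim q^{k+j}e^{-\beta(k+j-l)/2}\|a\|_2$ with implicit constant depending only on $q$), so the identity should be read as holding up to an $O_q(1)$ multiplicative constant.
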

We will also need the following lemma whose proof is clear.

\begin{lemma}\label{l:edgenorm}
 Let $k \leq 2l$ be fixed, and for any $x \in D$ and $y \in \mathfrak X$ such that $d(x,y) = 2l$, let $w$ be the vertex of the segment $[x,y]$ such that $d(x,w) = k$.
 $$ \sum_{\substack{x\in D\\ \rho(x) > 4T}} \sum_{\substack{y\in \mathfrak{X}\\ d(x,y)=2l}} \sum_{\substack{e\in\mathfrak{X}' \\ e^- = w}} |f (e)| \lesssim q^{2l} \sum_{e \in D'}  | f (e)| $$
  where $D'$ is a fundamental domain for the action of $\Gamma$ on $\mathfrak{X}'$.
\end{lemma}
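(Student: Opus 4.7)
The plan is to carefully rearrange the triple sum using the $\Gamma$-invariance of $|f|$ (which is implicit, since $f$ descends from the graph) and the injectivity radius hypothesis $\rho(x)>4T$, so that no factor of $|D|=|V_k|$ appears on the right-hand side; such a factor would arise from a crude summation and would destroy the final estimate.

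First I will dispose of the sum over $y$. For fixed $x$, once $w$ is the vertex on $[x,y]$ with $d(x,w)=k$, the endpoint $y$ ranges over vertices at distance $2l-k$ from $w$ lying in the branches of $w$ opposite to $x$. A direct count gives at most $(q+1)q^{2l-k}$ admissible $y$'s (the factor $(q+1)$ matters only in the degenerate case $k=0$, where no direction at $w$ is distinguished). This reduces the problem to bounding
\[
 \sum_{\substack{x\in D\\ \rho(x)>4T}}\;\sum_{\substack{e\in\mathfrak{X}'\\ d(x,e^-)=k}}|f(e)|.
\]

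Second, for each such $x$ I will use the $\Gamma$-invariance of $|f|$ to unfold the inner sum as
\[
 \sum_{\substack{e\in\mathfrak{X}'\\ d(x,e^-)=k}}|f(e)|
 \;=\; \sum_{e\in D'}|f(e)|\,\bigl|\{\gamma\in\Gamma:d(x,(\gamma e)^-)=k\}\bigr|.
\]
Since $\rho(x)>4T$ and $k\leq 2l\leq 4T$, the ball $B(x,4T)$ embeds into $G$, so two distinct $\Gamma$-translates of $e^-$ cannot both lie in it; hence the inner multiplicity is at most $1$.

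Finally I will swap the order of summation and count, for each $e\in D'$, the number of $x\in D$ with $\rho(x)>4T$ that contribute. By the same injectivity argument, this count equals the number of vertices in $G$ at distance exactly $k$ from the projection of $e^-$, which is bounded by $(q+1)q^{k-1}$ (and by $1$ when $k=0$). Multiplying the three factors yields $q^{2l-k}\cdot q^{k-1}\lesssim q^{2l}$, independent of $|V_k|$, which is the desired estimate.

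The main obstacle is precisely the bookkeeping of the summation order: summing over $x\in D$ before using $\Gamma$-invariance in $e$ would produce a factor of $|D|=|V_k|$ on the right-hand side and ruin Proposition~\ref{p:egorov_graphs}. Once the order is chosen so that $\Gamma$-invariance and the injectivity condition are exploited together, every individual step reduces to elementary combinatorics on the $(q+1)$-regular tree.
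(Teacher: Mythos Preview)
Your proof is correct. The paper itself does not supply a proof, stating only that the lemma's ``proof is clear,'' so your argument fills in exactly what the authors leave to the reader, and the counting you do is the natural one.

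One remark: the argument can be made slightly simpler than you present it, because the injectivity-radius hypothesis $\rho(x)>4T$ is not actually needed for the bound. After your Step~1 reduction to
\[
\sum_{x\in D}\;\sum_{\substack{e\in\mathfrak{X}'\\ d(x,e^-)=k}}|f(e)|,
\]
you can swap the sums and fold via $\Gamma$-invariance in one stroke: writing each $e\in\mathfrak X'$ as $\gamma e_0$ with $e_0\in D'$ and $\gamma\in\Gamma$, the double sum becomes
\[
\sum_{e_0\in D'}|f(e_0)|\cdot\bigl|\{(x,\gamma)\in D\times\Gamma:\ d(x,\gamma e_0^-)=k\}\bigr|
=\sum_{e_0\in D'}|f(e_0)|\cdot\bigl|\{x'\in\mathfrak X:\ d(x',e_0^-)=k\}\bigr|,
\]
since $(x,\gamma)\mapsto\gamma^{-1}x$ is a bijection $D\times\Gamma\to\mathfrak X$. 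The last count is simply $|S_k(e_0^-)|\lesssim q^k$ in the tree, giving $q^{2l-k}\cdot q^k=q^{2l}$ with no appeal to injectivity at all. Your route via the multiplicity-one bound and the graph sphere $S_k^G(v_0)$ reaches the same conclusion and is perfectly valid; the restriction $\rho(x)>4T$ appears in the lemma only because that is the sum arising in the application, not because the inequality requires it.
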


\begin{proof}[Proof of Lemma \ref{l:Sop}]
We use Lemma \ref{l:Tq sum} and Lemma \ref{l:edgenorm} to write
 \begin{align*}
 \| \tilde S_{2j,2k,2l} \|_{HS}^2 & = \sum_{\substack{x\in D\\ \rho(x) > 4T}} \sum_{\substack{y\in \mathfrak X\\ d(x,y)=2l}} \left| \sideset{}{'}\sum_{e^- = w} q^{k+j-l} (T'_q)^{k+j-l} B a (e) \right|^2 \\
 &\lesssim q^{2(k+j-l)} q^{2l} \sum_{e \in D'} \left| (T'_q)^{k+j-l} B a (e) \right|^2.
 \end{align*}
 Then Lemma \ref{l:edgegap} gives
 $$ \| \tilde S_{2j,2k,2l} \|_{HS}^2 \lesssim q^{2(k+j)} e^{-\beta(k+j-l)} \|a\|_2^2.$$
\end{proof}

\section{The argument on the sphere}\label{sphere}

We bring here the additional ingredients needed to adapt the methods of the previous sections to the setting of $\sph$, in order to prove Theorems~\ref{t:spheremain} and \ref{t:spherealgebraic}.  We begin by reviewing some harmonic analysis on $\sph$ that we will need; further details can be found in eg. \cite{Hobson, Szego}.

\subsection{Some Harmonic Analysis on $\sph$}\label{s:harmonic analysis}

Eigenfunctions of the Laplacian on $\sph$ can be realized as restrictions to $\sph$ of harmonic polynomials in $\mathbb{R}^3$.  The Laplace eigenvalues are given by $s(s+1)$ for $s\in\mathbb{N}$, and the corresponding eigenspace $\mathcal{H}_s$ of spherical harmonics has dimension $2s+1$.  There is a unique $L^2$-normalized function in each $\mathcal{H}_s$ that is radial with respect to the North pole $z=0$. We call it $Y_s$ and it is given in $(\theta,\phi)$ coordinates by
$$Y_s(\theta, \phi)= Y_s(\theta) = \sqrt{\frac{2s+1}{4\pi}} L_s(\cos\theta)$$
where $L_s$ is the {\bf Legendre polynomial} of degree $s$.  In fact, an orthonormal basis of the eigenspace $\mathcal{H}_s$ is given by
$$Y_s^m(\theta, \phi)=(-1)^m\sqrt{\frac{2s+1}{4\pi}\frac{(s-m)!}{(s+m)!}}L_s^m(\cos\theta)e^{im\phi}$$
as $m$ runs over $-s\leq m\leq s$, where $L_s^m$ is the {\bf associated Legendre polynomial} of degree $s$ and order $m$.

We also define the {\bf zonal spherical harmonics} $Z^{(s)}_z$ to be the unique element of $\mathcal{H}_s$ that is radial with respect to $z\in\sph$, and is normalized by
$$Z_0^{(s)} (\theta, \phi) = \sqrt{\frac{2s+1}{4\pi}}Y_s (\theta, \phi) = \frac{2s+1}{4\pi} L_s(\cos\theta)$$
With this normalization, $Z_z^{(s)}$ has the reproducing property on $\mathcal{H}_s$
\begin{equation}\label{reproducing}
\langle Z_z^{(s)}, \psi\rangle = \psi(z)
\end{equation}
for any $\psi\in\mathcal{H}_s$.  The zonal spherical harmonics can  be expressed in terms of the $Y_s^m$--- indeed, in terms of any orthonormal basis of $\mathcal{H}_s$--- by
$$Z_z^{(s)}(y) = \sum_{m=-s}^s Y_s^m(z)\overline{Y_s^m(y)}$$
We will also make use of the estimate \cite[Theorem 7.3.3]{Szego}
\begin{equation}\label{legendre decay}
|L_s(\cos\theta)| < \frac{1}{\sqrt{s \sin\theta}} < \frac{2}{\sqrt{s \theta}}
\end{equation}
for $0\leq \theta\leq \frac{\pi}{2}$, which implies 
\begin{equation}\label{e:Zest} 
|Z_z^{(s)}(y)| \lesssim\sqrt{\frac{s}{d_\sph (z,y)}}
\end{equation}
for points $z,y$ in the same hemisphere.

We can use this to estimate the inner product between two zonal spherical harmonics:
\begin{lemma}\label{inner product z z'}
Let $z,z'\in\sph$.  Then
$$\left| \langle Z_z^{(s)}, Z_{z'}^{(s)}\rangle\right| < 2 \sqrt{\frac{s}{{\ds^{\pm}(z,z')}}}$$
where $\ds^{\pm}(z,z') := \min\{\ds(z,z'), \ds(z,-z')		\}$ is the distance from $z$ to either  $z'$ or it's antipodal point $-z'$, whichever is closer (i.e. whichever is in the same hemisphere as $z$).
\end{lemma}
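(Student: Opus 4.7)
The plan is to exploit the fact that $Z_{z'}^{(s)}$ itself belongs to $\mathcal{H}_s$, so the reproducing property \eqref{reproducing} collapses the inner product to a single pointwise evaluation. Explicitly, I would write
\[ \langle Z_z^{(s)}, Z_{z'}^{(s)}\rangle = Z_{z'}^{(s)}(z) = \frac{2s+1}{4\pi} L_s(\cos \ds(z,z')), \]
using the explicit formula for the zonal harmonic given above together with the fact that on the unit sphere the geodesic distance coincides with the angle between the two points. Conjugation plays no role because $Z_{z'}^{(s)}$ is real-valued (it is, up to a constant, a Legendre polynomial in the cosine of the angle).

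Once we have this identity, the proof reduces to a routine application of the Legendre polynomial estimate \eqref{legendre decay}. If $\ds(z,z') \leq \pi/2$, then \eqref{legendre decay} applied with $\theta = \ds(z,z')$ gives the bound directly. If instead $z'$ lies in the opposite hemisphere from $z$, I would invoke the parity identity $L_s(-x) = (-1)^s L_s(x)$ together with $\pi - \ds(z,z') = \ds(z,-z')$ to rewrite $L_s(\cos \ds(z,z')) = (-1)^s L_s(\cos \ds(z,-z'))$ and then apply \eqref{legendre decay} to the angle $\ds(z,-z') \leq \pi/2$. In both cases the relevant distance is precisely $\ds^{\pm}(z,z')$.

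Combining these two steps produces
\[ \bigl| \langle Z_z^{(s)}, Z_{z'}^{(s)}\rangle \bigr| < \frac{2s+1}{4\pi} \cdot \frac{2}{\sqrt{s\, \ds^{\pm}(z,z')}} = \frac{2s+1}{2\pi\sqrt{s}} \cdot \frac{1}{\sqrt{\ds^{\pm}(z,z')}}, \]
and the stated inequality follows from the elementary check that $(2s+1)/(2\pi\sqrt{s}) < 2\sqrt{s}$ for every $s \geq 1$. There is no genuine obstacle in the argument; the only point requiring care is the antipodal bookkeeping, which is handled cleanly by the parity of the Legendre polynomial.
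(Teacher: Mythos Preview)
Your proof is correct and follows essentially the same approach as the paper: collapse the inner product to a single pointwise value via the reproducing property, use the parity of the Legendre polynomial to reduce to an angle at most $\pi/2$, and then invoke the decay estimate \eqref{legendre decay}. The only cosmetic difference is that the paper first rotates so that $z'$ is the north pole before carrying out these same steps, whereas you work directly with the explicit formula $Z_{z'}^{(s)}(z)=\tfrac{2s+1}{4\pi}L_s(\cos\ds(z,z'))$; you are also a bit more explicit about the final constant.
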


\begin{proof}
By symmetry, we may apply a rotation and assume that $z'=0$ is the north pole, whereby our inner product becomes
$$\left|\langle Z_z^{(s)}, Z_0^{(s)}\rangle\right| $$
Since Legendre polynomials are either even or odd, the zonal spherical harmonics are either even or odd with respect to antipodal reflection, and so since we are taking absolute value of the inner product we may assume that $z$ is in the upper hemisphere; i.e. that 
$$0\leq \ds(z,0) =\ds^{\pm}(z,0) \leq \frac{\pi}{2}$$

Since $Z_0^{(s)}\in\mathcal{H}_s$, the reproducing property (\ref{reproducing}) implies, together with \eqref{e:Zest}, that
$$\langle Z_z^{(s)}, Z_0^{(s)}\rangle = Z_0^{(s)}(z) \lesssim \sqrt{\frac{s}{d_\sph (z,0)}}.$$
\end{proof}

\subsection{Quantum Ergodicity in $\mathcal{H}_s$}\label{s:QE Hs}

We now turn to Theorems~\ref{t:spheremain} and \ref{t:spherealgebraic}.  For each $x \in \mathbb{S}^2$, our set of rotations $g_1,g_2,\ldots g_N$ generates a regular combinatorial tree, embedded (non isometrically) in $\sph$, that we will denote by 
$$\mathfrak{X}(x) = \{g\cdot x : \, \, g\in \,\, \langle g_1, g_2, \ldots, g_N \rangle \}$$
If $x$ and $y$ are two vertices of the same tree (that is, if $\mathfrak{X}(x) = \mathfrak{X}(y)$), we will denote by $d(x,y)$ the geodesic distance on the tree between $x$ and $y$, meaning the length of the shortest path in $\mathfrak{X}(x)$ between the two vertices. 
As above, the geodesic distance on the sphere between two points $z_1$ and $z_2$ will be denoted by $\ds(z_1,z_2)$.

The Laplacian commutes with isometries, and thus with the action of the operator $T_q$ defined in (\ref{Tq}) averaging over this generating set of rotations; thus $T_q$ acts on each Laplace eigenspace $\mathcal{H}_s$. Similarly to Section \ref{qe on graphs} we will say that $T_q$ acting on a subspace of $L^2(\sph)$ has a spectral gap $\beta$ if the spectrum of ${T_q}$ for the action on this subspace is included in
\begin{equation}\label{e:gapsphere}
\left[ - 2\cosh\left(\frac{\log q}2 -\beta \right), 2\cosh\left(\frac{\log q}2 -\beta \right) \right] \cup \left\{ 2\cosh\left(\frac{\log q}2\right) \right\}.
\end{equation}

We fix once and for all an orthonormal basis $\{\psi_j^{(s)}\}_{j=1}^{2s+1}$ of each $\mathcal{H}_s$, consisting of $T_q$-eigenfunctions $T_q\psi_j^{(s)} = \lambda(s,j)\psi_j^{(s)}$.

For any $r \in \N$, we denote by $S_r(x)$ the sphere of radius $r$ in the tree $\mathfrak{X}(x)$, and by $B_r(x)$ the ball of radius $r$.
For $n\in\N$, recall that $P_n$ is the sequence of Chebyshev polynomials of the first kind, defined by the relation
$$ P_n(\cos\theta) = \cos(n\theta). $$

We now fix a test function $a \in L^2(\sph)$. For the proof of Theorem~\ref{t:spheremain} we will take $a$ to be a linear combination of spherical harmonics. The corresponding subspace being of finite dimension, the existence of a spectral gap $\beta_a$ on this subspace is automatic and will depend on $a$. We then extend the result to any continuous test function $a$, by using the fact that $a$ can be uniformly approximated by finite linear combinations of spherical harmonics. For the proof of Theorem~\ref{t:spherealgebraic} we take $a \in L^\infty(\sph)\subset L^2(\sph)$. The additional condition that the rotations have algebraic entries guarantees that $T_q$ will have a spectral gap on all of $L^2(\sph)$ by \cite{BG}. 
 For simplicity we will finally assume in both cases, without loss of generality, that $a \in L^2_0(\sph)$, that is $\int_\sph a \, d\sigma = 0$.  

We identify the test function $a$ with the operator of multiplication by $a$, and we are interested in the following operator acting on functions of the sphere:
\begin{equation}\label{e:timeaverage}
	\frac1{T} \sum_{n=1}^T P_{2n}(T_q/2) a P_{2n}(T_q/2).
\end{equation}
In order to work inside $\mathcal{H}_s$, we use the operator of convolution with $Z_0^{(s)}$. To wit, we write the kernel of $P_{2n}(T_q/2) a P_{2n}(T_q/2)$ as the function $K_{2n}(x,y)$ for $x \in \sph$ and $y \in \mathfrak{X}(x)$, such that any $u \in L^2(\sph)$ satisfies,
$$P_{2n}(T_q/2) a P_{2n}(T_q/2)u(x) = \sum_{y \in \mathfrak{X}(x)} K_{2n}(x,y)u(y).$$
We then define a kernel $[K_{2n}\ast Z^{(s)}](x,y)$ for every $x,y \in \sph$ by
$$ [K_{2n}\ast Z^{(s)}](x,y) = \sum_{z\in \mathfrak{X}(x)} Z^{(s)}_z(y) K_{2n}(x,z), $$
where $Z^{(s)}_z$ is the zonal spherical harmonic of degree $s$ centered at $z\in\mathbb{S}^2$ defined in Section \ref{s:harmonic analysis}.
Note that $K_{2n}\ast Z^{(s)}$ is the kernel of the operator 
$$P_{2n}(T_q/2)aP_{2n}(T_q/2)\tilde{Z}^{(s)}$$
on $\sph$,
where $\tilde{Z}^{(s)}$ is the operator of convolution with the zonal spherical harmonic $Z_0^{(s)}$ of degree $s$, in the sense that
$$[P_{2n}(T_q/2)aP_{2n}(T_q/2)\tilde{Z}^{(s)}u](x) = \int_\sph [K_{2n}\ast Z^{(s)}](x,y)u(y)d\sigma(y)		$$
Since $\tilde{Z}^{(s)}$ acts trivially on $\mathcal{H}_s$ by the reproducing property (\ref{reproducing}), we have equality of the restrictions
\begin{equation}\label{equality on H_s}
P_{2n}(T_q/2)aP_{2n} (T_q/2)\Big|_{\mathcal{H}_s} = P_{2n}(T_q/2)aP_{2n} (T_q/2)\tilde{Z}^{(s)}\Big|_{\mathcal{H}_s} 
\end{equation}
Indeed, $\tilde{Z}^{(s)}$ is nothing more than the orthogonal projection to $\mathcal{H}_s$.
\bigskip

Now, each rotation of $\sph$ fixes two antipodal points, and we consider the set of points $\mathcal{F}_{16T}\subset \sph$ fixed by a word of length $\leq 16T$ in the generators, where $T$ corresponds to the parameter of the same name in \eqref{e:timeaverage}.  For every $x\in \mathcal{F}_{16T}$, we take the neighborhood (in the sphere metric) of radius $s^{-1/4}$, and define the union of these balls to be
$$\mathcal{E}_s(16T) := \bigcup_{x\in \mathcal{F}_{16T}} B_\sph (x, s^{-1/4})$$
These are the points $x$ that are ``very close" (we think of $s$ as being large) to one of their images in $B_{16T}(x)$. Note that $x\notin \mathcal{E}_s(16T)$ implies that $B_{4T}(x)\cap\mathcal{E}_s(8T)=\emptyset$; i.e. any point $g\cdot x\in B_{4T}(x)$ cannot be $s^{-1/4}$-close to a fixed point of a word $h$ of length $\leq {8T}$, since then $x$ would have to be $s^{-1/4}$- close to a fixed point of $g^{-1}hg$.

We will require the following on the parameters $T$ and $s$:
For  any $x\notin\mathcal{E}_s(16T)$, and $z\in B_{4T}(x)$, we have
\begin{equation}\label{e:Tcondition}
\left\{
\begin{array}{ll}
B_\sph(z, s^{-1/2})\cap B_{4T}(x) & =  \{z\} \\
B_\sph(-z, s^{-1/2})\cap B_{4T}(x) & =  \emptyset
\end{array}
\right.
\end{equation}
The free group assumption guarantees that for all $T$ we can find $s_0$ large enough so that the condition is satisfied for any pair $T, s$ with $s \geq s_0$. 
Condition \eqref{e:Tcondition} allows us to apply the methods of Sections~\ref{qe on graphs} and \ref{main estimate graphs} to the kernel $K\ast Z^{(s)}$ on the sphere (see Lemma~\ref{l:HSnorm}).

In the case of algebraic rotations, it is sufficient to take $T < c\log{s}$ for some small constant $c>0$ depending on our generating set of rotations, as is shown in the following lemma.

\begin{lemma}\label{l:algebraicity and discreteness}
There exists a constant $c>0$ depending only on the generating set $\{g_1,\ldots, g_N\}$, such that for all $T < c\log s$, condition \eqref{e:Tcondition} is satisfied for all $x\notin\mathcal{E}_s(16T)$ and $z\in B_{4T}(x)$.
\end{lemma}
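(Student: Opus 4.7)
The plan is to reduce both parts of \eqref{e:Tcondition} to a Diophantine-type lower bound on how close a non-trivial short word in the $g_i$ can be to the identity, exploiting the algebraicity hypothesis. Suppose first that $z, z' \in B_{4T}(x)$ with $z \neq z'$ and $\ds(z, z') < s^{-1/2}$. Then $z' = wz$ for some reduced word $w$ of length $|w| \leq 8T$, with $w \neq I$ since $\langle g_1,\dots,g_N\rangle$ is free. Writing $w$ as a rotation by angle $\theta_w$ about an axis $v \in \sph$, and letting $\phi$ be the angle between $z$ and $v$, a direct computation with inner products yields the identity
\[
\ds(z, wz) = 2 \arcsin\bigl(|\sin\phi \, \sin(\theta_w/2)|\bigr),
\qquad
\ds(-z, wz) = \pi - \ds(z, wz).
\]

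The hypothesis $x \notin \mathcal{E}_s(16T)$ implies, as noted in the excerpt, that $z \notin \mathcal{E}_s(8T)$; since the fixed points of $w$ on $\sph$ are exactly $\pm v$, this forces $\min(\phi, \pi-\phi) > s^{-1/4}$, and hence $|\sin\phi| \gtrsim s^{-1/4}$. The algebraicity assumption supplies the following Diophantine lower bound (essentially Liouville's inequality applied to the non-zero algebraic number $3 - \operatorname{tr}(w) = 4\sin^2(\theta_w/2)$, which lies in a fixed number field $K$ whose Galois conjugates are bounded by $M^{|w|}$ for some constant $M$ depending only on the generators): there exist $C_0, C_1 > 0$ depending only on $g_1,\dots,g_N$ such that every non-trivial $w$ of word length $\leq L$ satisfies $|\theta_w| \geq C_0 e^{-C_1 L}$. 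Combining these yields $\ds(z, wz) \gtrsim s^{-1/4} e^{-8 C_1 T}$, which exceeds $s^{-1/2}$ as soon as $T < c \log s$ for a sufficiently small constant $c > 0$; this rules out the first scenario.

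For the antipodal condition, suppose instead that $z' = wz \in B_{4T}(x)$ satisfies $\ds(-z, wz) < s^{-1/2}$; by the identity above this forces $\ds(z, wz) > \pi - s^{-1/2}$, so $|\sin\phi \sin(\theta_w/2)| > \cos(s^{-1/2}/2)$, which in turn forces both $|\sin\phi|$ and $|\sin(\theta_w/2)|$ to lie within $O(s^{-1/2})$ of $1$. In particular $\theta_w = \pi \pm O(s^{-1/2})$, so $w^2$ is a rotation by an angle of magnitude $O(s^{-1/2})$. But $w \neq I$ implies $w^2 \neq I$ in the free group, and $w^2$ has word length $\leq 16T$; the Diophantine bound applied to $w^2$ gives $|\theta_{w^2}| \geq C_0 e^{-16 C_1 T}$, contradicting $|\theta_{w^2}| = O(s^{-1/2})$ provided $T < c\log s$ for suitably small $c$.

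The main obstacle is establishing the Diophantine bound on $|\theta_w|$: one needs to check that the matrix entries of $w$ all lie in a fixed number field $K/\mathbb{Q}$ of finite degree and that the naive heights of those entries grow at most exponentially in $|w|$, so that Liouville's inequality applied to the non-zero element $3 - \operatorname{tr}(w) \in K$ yields a lower bound of the form $C_0' e^{-C_1'|w|}$. Once this is in place, the remaining spherical geometry is elementary, and one simply takes $c = c(g_1,\dots,g_N)$ to be any positive constant smaller than the minimum of the two thresholds produced above (and small enough to absorb the implicit $O(1)$ constants for $s$ large).
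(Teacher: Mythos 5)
Your proposal is correct and follows the same overall strategy as the paper: you use the algebraicity to obtain a Diophantine lower bound (of the form $C_0 e^{-C_1 L}$) on the rotation angle of a non-trivial reduced word of length $\leq L$, and then combine this with the constraint $z \notin \mathcal{E}_s(8T)$ to force $\ds(z,wz) \gtrsim s^{-1/2}$. The paper obtains the angle lower bound by working with the logarithmic height of the matrix entries of $w-1$ (using properties (a)--(d) of heights from Bombieri--Gubler), whereas you apply Liouville's inequality to $3 - \operatorname{tr}(w) = 4\sin^2(\theta_w/2)$; these are two ways of expressing the same estimate, and you correctly identify the height-growth bookkeeping as the one thing that still needs to be checked. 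The one place your argument is actually more careful than the paper's is the antipodal part: the paper asserts the identity $\ds(z,g^2 z) = 2\,\ds(-z,gz)$, which is not exact in general (it fails, e.g., when $\phi \neq \pi/2$ and $\theta_g = \pi$, where the left side is $0$ but the right side is not). Your derivation, using $\ds(z,wz) = 2\arcsin(|\sin\phi\,\sin(\theta_w/2)|)$ together with $\ds(-z,wz)=\pi-\ds(z,wz)$ to force $\theta_w$ to be within $O(s^{-1/2})$ of $\pi$ and then applying the Diophantine bound to $w^2$, reaches the same conclusion via a sound route and is, in that respect, an improvement over the argument as written in the paper.
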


\begin{proof}
Let $K$ be a finite extension of $\IQ$ containing all entries of the matrices $g _ 1, \dots, g _ N$.
Recall the notion of (logarithmic) height of an algebraic number $\alpha \in K$ from  e.g.~\cite[Ch.~1]{Bombieri-Gubler}. What is important for us is that it is a nonnegative real number measuring the complexity of nonzero $\alpha \in K$ (e.g. for a rational number given in lowest terms by $p/q$ we have that $h(p/q)= \log \max (p,q)$) with the following properties:
\begin{enumerate}[label=(\alph*)]
\item $h(\alpha \beta) \leq h (\alpha) + h (\beta)$
\item $h (\alpha + \beta) \leq h (\alpha) + h (\beta) + \log 2$ (cf. \cite[\S1.5.16]{Bombieri-Gubler})
\item for any embedding $\iota: K \hookrightarrow \IC$ and any algebraic number $\alpha \neq 0$ we have that $\abs {\iota (\alpha)} \geq e^{-[K:\IQ]h(\alpha)}$ (cf. \cite[\S1.5.19]{Bombieri-Gubler}).
\item $h (\alpha ^{-1}) = h (\alpha)$
\end{enumerate}
It follows that if we set for $g \in \SO (3, \overline{\IQ})$ the height $h (g)$ of $g$ to be the maximum of the heights of its coordinates then for any $g _ 1, g _ 2 \in \SO (3, \overline {\IQ})$
\begin{align*}
h (g _ 1 g _ 2) &\leq h (g _ 1) + h (g _ 2) + O (1) \\
h (g _ 1 ^{-1}) &\leq O (h (g _ 1) + 1)
.\end{align*}
Thus if $w (g _ 1, \dots, g _ N)$ is any word of length $\ell$ in the given generating set
\begin{equation*}
h (w (g _ 1, \dots, g _ N)) \ll \ell (1+\max (h (g_1) ,\dots, h (g _ N ))
.\end{equation*}
Since $g _ 1, \dots, g _ N$ generate a free group, it follows from the above and the basic property (c) of heights that for any reduced word $w$ of length $\ell = 8T$ if $g=w(g_1, \dots g _ N)$ then $\norm {g-1} \geq M^{-8T}$ for some $M$ depending only on the generating set, in other words $g$ is a rotation of angle $\theta$ around its axis with $\abs {\theta} \geq M ^ {-8 T}$.

Choosing $c$ small enough, we can guarantee that 
$$M^{-8T} > M^{-8c\log{s}} = s^{8c\log{M}}> s^{-1/4}$$ 
and thus each word of length $\leq 8T$ in the generators is a rotation through an angle  $\geq s^{-1/4}$.  This means that for any $x\notin \mathcal{E}_s(16T)$, and any pair of distinct points $z,z' \in B_{4T}(x)$ --- which implies $z,z' \notin \mathcal{E}_s(8T)$--- there exists a non-trivial word $g$ of length at most $8T$ in the generators such that $g.z=z'$.  By the above argument, this means that $g$ must be a rotation through an angle of at least $\geq s^{-1/4}$, which for points $z\notin \mathcal{E}_s(8T)$ means that $d_\sph(z,g.z) \gtrsim s^{-1/2}$.  Since this is true for all $z\neq z'\in B_{4T}$, the first part of (\ref{e:Tcondition}) follows after adjusting $c$ to absorb the implied constant.  

For the second part of (\ref{e:Tcondition}), simply observe that if there exists a rotation $g$ such that $d_\sph(-z, g.z)$ is small, then $d_\sph(z, g^2.z) = 2d_\sph(-z,g.z)$; and so by further reducing the constant $c$, we may incorporate the second part of (\ref{e:Tcondition}) as well.
\end{proof}

\medskip

We can now state the following central estimate. It will be proved in Section~\ref{main estimate sphere}.

\begin{proposition}\label{p:egorov}
Let $T,s > 0$ satisfying condition \eqref{e:Tcondition}. Let $a \in H \subset L^2(\sph)$, where $H$ is a subspace closed under the action of $T_q$, and such that ${T_q}{\restriction_{H}}$ has a spectral gap $\beta_H$, as defined in \eqref{e:gapsphere}. Then
 $$\left\| \frac1T \sum_{n=1}^T P_{2n}(T_q/2) a P_{2n}(T_q/2)\tilde{Z}^{(s)} \right\|_{HS}^2 
 \lesssim \frac{s}{\beta_H^2 T} \|a\|_2^2 + s^{1/2} q^{16T} \|a\|_\infty^2,$$
 with an implied constant depending only on $q$.
\end{proposition}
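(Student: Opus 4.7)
The plan is to transport the graph argument of Proposition~\ref{p:egorov_graphs} to the sphere, with the convolution by $Z_0^{(s)}$ ultimately introducing a factor of $s$ that reflects the $L^2$-size of zonal reproducing kernels on $\mathcal{H}_s$. Writing $W_T(x) := \sum_{z\in\mathfrak{X}(x)} K_T(x,z)\,Z_z^{(s)} \in \mathcal{H}_s$ and $K_T = \frac{1}{T}\sum_n K_{2n}$, the Hilbert--Schmidt norm squared of the operator equals $\int_{\sph}\|W_T(x)\|_{L^2(\sph)}^2\,d\sigma(x)$, and I would split this integral according to whether $x\in\mathcal{E}_s(16T)$ or not.

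For $x\in\mathcal{E}_s(16T)$, which is a union of $\lesssim q^{16T}$ spherical disks of radius $s^{-1/4}$ and hence has area $\lesssim q^{16T}s^{-1/2}$, I would combine the trivial bound $|\langle Z_z^{(s)},Z_{z'}^{(s)}\rangle|\lesssim s$ with pointwise control on $K_T(x,z)$ coming from \eqref{e:K2n} to produce the advertised error term $s^{1/2}q^{16T}\|a\|_\infty^2$. For $x\notin\mathcal{E}_s(16T)$, condition \eqref{e:Tcondition} provides $s^{-1/2}$-separation of the points of $B_{4T}(x)$ on $\sph$ (and the same for their antipodes), and a standard Marcinkiewicz--Zygmund sampling inequality for $\mathcal{H}_s$---equivalent by duality to the operator-norm bound $\|\Phi\|^2\lesssim s$ for $\Phi:(c_z)\mapsto\sum_z c_z Z_z^{(s)}$ from $\ell^2(B_{4T}(x))$ into $\mathcal{H}_s$---then yields
\[ \|W_T(x)\|_{L^2(\sph)}^2 \lesssim s\sum_{z\in\mathfrak{X}(x)}|K_T(x,z)|^2.\]
Integrating over $x$, the proof reduces to showing $\int_{\sph}\sum_{z\in\mathfrak{X}(x)}|K_T(x,z)|^2\,d\sigma(x) \lesssim \|a\|_2^2/(\beta_H^2 T)$.

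This residual tree-side inequality is the direct sphere analogue of the bound $\|A_T'\|_{HS}^2\lesssim \|a\|_2^2/(\beta^2T)$ established in Section~\ref{main estimate graphs}, and I would prove it by copying that argument: decompose $K_T$ via the operators $S_{2j,2k,2l}$ as in \eqref{e:AT}; use the fact that distinct $l$'s give disjoint supports in $z$ to obtain orthogonality in $l$ after integration over $x$; apply Lemma~\ref{l:Tq sum} to rewrite each inner sum via the arc-graph operator $T_q'$; invoke Lemma~\ref{l:edgegap} with the hypothesised spectral gap $\beta_H$ to extract the decay $e^{-\beta_H(k+j-l)}$; and sum the resulting geometric series in $k+j-l$ exactly as in the computation following Lemma~\ref{l:Sop}. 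Multiplied by the factor $s$ from the sampling step, this produces the main term $s\|a\|_2^2/(\beta_H^2 T)$.

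The main obstacle is Step~2: justifying that $s^{-1/2}$-separation of the points in $B_{4T}(x)$ is enough to pay only a factor of $s$ when passing from the tree side to the spherical $L^2$-norm. This is really where the harmonic analysis of $\mathcal{H}_s$ enters---the natural scale of the kernel $Z_0^{(s)}$ is precisely $s^{-1/2}$, so \eqref{e:Tcondition} is tight. A secondary technical point is that the spectral gap is only assumed on $H$, so the arc-graph analogue of Lemma~\ref{l:edgegap} must be formulated for $T_q$ acting on the image of $H$ under the maps $B,E$ of \eqref{e:beginend}; this restricted setup inherits the same gap $\beta_H$ by the same 2$\times$2 matrix calculation, so no further input is needed.
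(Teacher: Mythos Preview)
Your outline matches the paper's proof closely: split off $\mathcal{E}_s(16T)$, reduce the complement to a tree-side Hilbert--Schmidt bound via a sampling estimate, then run the arc-graph argument of Section~\ref{main estimate graphs} with $\beta_H$ in place of $\beta$. The paper carries this out with sphere analogues of Lemmas~\ref{l:Tq sum}, \ref{l:edgegap}, \ref{l:edgenorm} (these are Lemmas~\ref{l:Tq sum sphere}, \ref{l:edgegapsphere}, \ref{l:L2edge}), and your remark on the restricted gap is exactly how Lemma~\ref{l:edgegapsphere} is proved.

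The one place where your write-up is thinner than the paper is the sampling step, which you package as a ``standard Marcinkiewicz--Zygmund inequality for $\mathcal{H}_s$'' at separation $s^{-1/2}$. The paper does \emph{not} cite such a result; instead it proves the required bound directly (Lemma~\ref{l:HSnorm}) via the crude off-diagonal estimate $|\langle Z_z^{(s)},Z_{z'}^{(s)}\rangle|\lesssim\sqrt{s/d_\sph^{\pm}(z,z')}\lesssim s^{3/4}$ from Lemma~\ref{inner product z z'}, followed by Cauchy--Schwarz over the $\le q^{4T}$ points of $B_{4T}(x)$. This succeeds only because, in addition to the $s^{-1/2}$-separation coming from \eqref{e:Tcondition}, one also has $q^{4T}\lesssim s^{1/4}$ (automatic in both applications: $T<c\log s$ in the algebraic case, or $s$ large relative to $T$ in the general case). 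You should be wary of invoking MZ as a black box here: the standard MZ theory on $\sph$ is stated for the full polynomial space $\bigoplus_{k\le s}\mathcal{H}_k$ at the critical scale $1/s$, not for a single eigenspace at scale $s^{-1/2}$, and the $1/\sqrt{d}$ decay of $\langle Z_z^{(s)},Z_{z'}^{(s)}\rangle$ is too slow for a Schur-test argument over an arbitrary $s^{-1/2}$-net. The paper's three-line direct argument sidesteps this entirely by exploiting the small cardinality of $B_{4T}(x)$, so that is the route to take.
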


\vspace{.1in}

We will now prove Theorems \ref{t:spheremain} and \ref{t:spherealgebraic} using this estimate. We have the general lemma
\begin{lemma}\label{l:HSQE}
For any function $a \in L^2(\sph)$,
	$$ \sum_{j=1}^{2s+1} |\la \psi^{(s)}_j, a \, \psi^{(s)}_j \ra|^2 
	\lesssim \left\| \frac1T \sum_{n=0}^T P_{2n} a P_{2n}\tilde{Z}^{(s)} \right\|_{HS}^2, $$
where the implied constant is absolute.
\end{lemma}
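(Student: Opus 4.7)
The plan is to carry out on the sphere exactly the same maneuver that was used on graphs to pass from Proposition~\ref{p:egorov_graphs} to Theorem~\ref{t:main}, inserting the projection $\tilde Z^{(s)}$ wherever needed. The crucial observation is that each $\psi_j^{(s)} \in \mathcal{H}_s$ is an eigenfunction of $T_q$, so writing $\lambda(s,j) = 2\cos\theta_{s,j}$ we have $P_{2n}(T_q/2)\psi_j^{(s)} = \cos(2n\theta_{s,j})\psi_j^{(s)}$, and by the reproducing property $\tilde Z^{(s)} \psi_j^{(s)} = \psi_j^{(s)}$ since $\tilde Z^{(s)}$ is the orthogonal projection onto $\mathcal{H}_s$.

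Given this, the first step is to apply Lemma~\ref{l:unitarity} (assuming $T \geq 10$, which we may take for granted) to write
\[
|\la \psi_j^{(s)}, a\psi_j^{(s)}\ra|^2 \lesssim \left|\frac{1}{T}\sum_{n=1}^T \cos^2(2n\theta_{s,j})\right|^2 |\la \psi_j^{(s)}, a\psi_j^{(s)}\ra|^2,
\]
and then recognize the right-hand side as
\[
\left| \left\la \psi_j^{(s)},\; \frac{1}{T}\sum_{n=1}^T P_{2n}(T_q/2)\, a\, P_{2n}(T_q/2)\tilde Z^{(s)}\, \psi_j^{(s)} \right\ra \right|^2,
\]
using that $P_{2n}(T_q/2)$ is self-adjoint, that the two factors of $P_{2n}(T_q/2)$ flanking $a$ each produce a factor $\cos(2n\theta_{s,j})$ when acting on the eigenfunction, and that $\tilde Z^{(s)}$ acts as the identity on $\mathcal{H}_s$.

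The second step is to sum over $j=1,\dots,2s+1$ and bound the diagonal sum by the full Hilbert--Schmidt norm. Extend the orthonormal family $\{\psi_j^{(s)}\}_{j=1}^{2s+1}$ to a complete orthonormal basis of $L^2(\sph)$ (say by adjoining orthonormal bases of each $\mathcal{H}_{s'}$ for $s' \neq s$); since for any bounded operator $A$ and any orthonormal basis $\{e_i\}$ one has $\sum_i |\la e_i, A e_i \ra|^2 \leq \|A\|_{HS}^2$, we obtain
\[
\sum_{j=1}^{2s+1} |\la \psi_j^{(s)}, a\psi_j^{(s)}\ra|^2 \lesssim \left\| \frac{1}{T}\sum_{n=1}^T P_{2n}(T_q/2)\, a\, P_{2n}(T_q/2)\tilde Z^{(s)} \right\|_{HS}^2,
\]
which is the claimed inequality (the difference between summing from $n=0$ or $n=1$ is harmless, contributing at most the single term $a\tilde Z^{(s)}/T$, absorbed into the implied constant, or one can simply use $n=1$ in Lemma~\ref{l:unitarity}).

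I do not anticipate a serious obstacle: the argument is a direct transcription of the diagonal-to-Hilbert--Schmidt step from the graph setting, with the sole modification being the insertion of $\tilde Z^{(s)}$, which is benign precisely because $\psi_j^{(s)} \in \mathcal{H}_s$. The only mild subtlety is to note explicitly that $\{\psi_j^{(s)}\}_{j=1}^{2s+1}$ is orthonormal in $L^2(\sph)$ (not just in $\mathcal{H}_s$), so that the extension to a full orthonormal basis and the comparison with the Hilbert--Schmidt norm are legitimate.
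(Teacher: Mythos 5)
Your proposal is correct and follows essentially the same route as the paper: apply Lemma~\ref{l:unitarity} to insert the Ces\`aro average of $\cos^2(2n\theta_{s,j})$, use the eigenfunction relation $P_{2n}(T_q/2)\psi_j^{(s)}=\cos(2n\theta_{s,j})\psi_j^{(s)}$ together with the fact that $\tilde Z^{(s)}$ acts as the identity on $\mathcal H_s$ (the paper's equation \eqref{equality on H_s}) to rewrite the diagonal matrix element, then bound the sum of squared diagonal entries by the Hilbert--Schmidt norm. Your added remark about extending $\{\psi_j^{(s)}\}_{j=1}^{2s+1}$ to a full orthonormal basis of $L^2(\sph)$ before invoking $\sum_i|\la e_i,Ae_i\ra|^2\leq\|A\|_{HS}^2$ makes the last step explicit, which the paper glosses over; otherwise the arguments coincide.
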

\begin{proof}
By Lemma~\ref{l:unitarity} and the equality of operators (\ref{equality on H_s}) on $\mathcal{H}_s$, we have as in the end of Section~\ref{qe on graphs}
\begin{align*}
   \sum_{j=1}^{2s+1} |\la \psi^{(s)}_j, a \psi^{(s)}_j \ra|^2 
  &\lesssim  \sum_{j=1}^{2s+1} \left|\frac{1}{T} \sum_{k=1}^T \cos^2 (2n\theta)\right|^2|\la \psi^{(s)}_j, a \psi^{(s)}_j \ra|^2 \\
  &\lesssim  \sum_{j=1}^{2s+1} \left|\left\la \psi^{(s)}_j, \frac1T \sum_{n=0}^T P_{2n} a P_{2n} \psi^{(s)}_j \right\ra\right|^2 \\
    &\lesssim  \sum_{j=1}^{2s+1} \left|\left\la \psi^{(s)}_j, \frac1T \sum_{n=0}^T P_{2n} a P_{2n} \tilde{Z}^{(s)}\psi^{(s)}_j \right\ra\right|^2 \\
  &\lesssim \left\| \frac1T \sum_{n=0}^T P_{2n} a P_{2n}\tilde{Z}^{(s)} \right\|_{HS}^2
\end{align*}
\end{proof}

\begin{proof}[Proof of Theorem \ref{t:spheremain}]
Let $k \in \N$, $s_1,\ldots,s_k \in \N \setminus \{0\}$, and 
$$ H = \bigoplus_{i=1}^k \mathcal H_{s_i}. $$
Notice  that $H$ is closed under the action of $T_q$ and that ${T_q}{\restriction_{H}}$ has no eigenvalues equal to $\pm 2\cosh(\log q / 2)$. The space $H$ being finite dimensional, ${T_q}{\restriction_{H}}$ has a finite number of eigenvalues and thus has trivially a spectral gap $\beta_H$ as defined in Section \ref{s:QE Hs}. We first prove the theorem for a test function $a \in H \subset L^\infty(\sph)$ (as $H$ is orthogonal to $\mathcal H_{0}$ we have $\int_\sph a \, d\sigma = 0$). 
Combining Lemma~\ref{l:HSQE} and Proposition~\ref{p:egorov} we get
\begin{align*}
  \frac1{2s+1}\sum_{j=1}^{2s+1} |\la \psi^{(s)}_j, a \psi^{(s)}_j \ra|^2
  &\lesssim \frac{1}{\beta_H^2 T} \|a\|_2^2 + s^{-1/2} q^{16T} \|a\|_\infty^2.
\end{align*}
We fix $T$ and use the free group assumption to find $s_0$ such that \eqref{e:Tcondition} is satisfied for the pair $T, s$ with $s \geq s_0$. We have
$$  \lim_{s \to +\infty} \frac1{2s+1} \sum_{j=1}^{2s+1} |\la \psi^{(s)}_j, a \psi^{(s)}_j \ra|^2 \lesssim  \frac{1}{\beta_H^2 T} \|a\|_2^2. $$
We then use the fact that we can find a sequence of test functions $a_n$ such that $$ \|a_n - a \|_\infty \to 0,$$ by density of finite linear combinations of spherical harmonics in the space of continuous function for the uniform convergence, to extend the conclusion to the case of a general continuous test function.
\end{proof}

\begin{proof}[Proof of Theorem \ref{t:spherealgebraic}]
In this case we assume that the rotations are algebraic. We can thus take a general test function $a \in L^2(\sph)$, then \cite{BG} tells us $T_q$ has a spectral gap as an operator acting on this space. We assume in addition that $a\in L^\infty(\sph)$ and $\int_\sph a \, d\sigma = 0$.

Combining Lemma~\ref{l:HSQE} and Proposition~\ref{p:egorov} we get
\begin{align*}
  \frac1{2s+1} \sum_{j=1}^{2s+1} |\la \psi^{(s)}_j, a \psi^{(s)}_j \ra|^2
  &\lesssim \frac{1}{\beta^2 T} \|a\|_2^2 + s^{-1/2} q^{16T} \|a\|_\infty^2.
\end{align*}
To finish the proof, we take $T = c_0 \log s$ with $c_0 < c $ (where the constant $c$ is given by Lemma \ref{l:algebraicity and discreteness}), so that condition \eqref{e:Tcondition} is satisfied, and $s^{-1/2} q^{16T} < s^{-\epsilon}$ for some $\epsilon > 0$.
\end{proof}

\section{Proof of the main estimate on the sphere}\label{main estimate sphere}

Here we prove Proposition~\ref{p:egorov}, assuming the rotations are algebraic. Recall that this assumption is used to get both a spectral gap for $T_q$ on very general spaces of test functions, and a quantitative relationship between the parameters $s$ and $T$. We use it in this section only for the quantitative aspect. The same ideas apply for general rotations generating a free group--- under the assumption assumption that $a$ is a finite linear combination of spherical harmonics--- except that we would need to replace all quantitative relationships between $s$ and $T$ with a more qualitative ``$s$ large enough with respect to $T$''.

The next Lemma enables us to apply the techniques of Section~\ref{main estimate graphs} to our kernels on $\sph$, outside of the small set $\mathcal{E}_s(16T)$ of ``bad" points:
\begin{lemma}\label{l:HSnorm}
For any $T,s$ satisfying condition \eqref{e:Tcondition},
and any function $K(x,y)$ defined on $\{ (x,y) \in \sph \times \sph \, | \, y \in \mathfrak X(x) \}$ such that $K(x,y)=0$ whenever $d(x,y)>{4T}$, we have
 \begin{align*}
\int_{\sph} |[K\ast Z^{(s)}](x,y)|^2 \, d\sigma(y) &\lesssim s \sum_{z\in\mathfrak{X}(x)} |K(x,z)|^2\cdot  \left\{  \begin{array}{ll} 1 & x\notin \mathcal{E}_s(16T)\\ q^{4T} & x\in\mathcal{E}_s(16T) \end{array} \right. 
 \end{align*}
 \end{lemma}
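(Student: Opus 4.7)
The plan is to expand the integral as a double sum over tree vertices $z, z' \in B_{4T}(x)$ and treat the diagonal and off-diagonal contributions separately, using the reproducing kernel properties of the $Z_z^{(s)}$ together with Lemma~\ref{inner product z z'} and the separation coming from condition~\eqref{e:Tcondition}.

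First, since $K(x, \cdot)$ is supported in $B_{4T}(x)$, expanding the square gives
\begin{equation*}
\int_\sph |[K \ast Z^{(s)}](x, y)|^2 \, d\sigma(y)
= \sum_{z, z' \in B_{4T}(x)} K(x, z)\overline{K(x, z')} \la Z_z^{(s)}, Z_{z'}^{(s)}\ra.
\end{equation*}
The diagonal contribution equals $\|Z_z^{(s)}\|_2^2 \sum_z |K(x, z)|^2 = \tfrac{2s+1}{4\pi} \sum_z |K(x, z)|^2$, which is already $\lesssim s \sum_z |K(x,z)|^2$, matching the target bound in both regimes; so the task reduces to controlling the off-diagonal sum by the same quantity (up to the factor $q^{4T}$ in the bad case). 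Applying the AM--GM inequality $|K(x,z)\overline{K(x,z')}| \leq \tfrac{1}{2}(|K(x,z)|^2 + |K(x,z')|^2)$ and symmetrizing further reduces the task to bounding
\begin{equation*}
\max_{z \in B_{4T}(x)} \sum_{z' \in B_{4T}(x) \setminus \{z\}} |\la Z_z^{(s)}, Z_{z'}^{(s)}\ra|
\end{equation*}
by $s$ in the good case, and by $sq^{4T}$ in the bad case.

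For the bad case $x \in \mathcal{E}_s(16T)$, I bound each inner product crudely by $\|Z_z^{(s)}\|_2 \|Z_{z'}^{(s)}\|_2 = \tfrac{2s+1}{4\pi} \lesssim s$, and use the tree ball cardinality $|B_{4T}(x)| \lesssim q^{4T}$, which immediately gives the desired factor $sq^{4T}$. For the good case $x \notin \mathcal{E}_s(16T)$, I combine Lemma~\ref{inner product z z'}, which yields $|\la Z_z^{(s)}, Z_{z'}^{(s)}\ra| \lesssim \sqrt{s/\ds^\pm(z, z')}$, with condition~\eqref{e:Tcondition}, which forces $\ds^\pm(z, z') \geq s^{-1/2}$ for distinct $z, z' \in B_{4T}(x)$. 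I organize the points $z'$ into dyadic shells $\{z' : 2^j s^{-1/2} \leq \ds^\pm(z, z') < 2^{j+1} s^{-1/2}\}$ around $z$ and $-z$; disjoint disk packing on the sphere (each $z'$ ``owns'' a spherical disk of radius $s^{-1/2}/2$) bounds the shell cardinality by $O(4^j)$, while the tree-ball bound caps the total cardinality by $q^{4T}$. Summing the per-shell contributions $4^j \cdot \sqrt{s/r_j} = 2^{3j/2} s^{3/4}$ up to the truncation index given by $|B_{4T}(x)|$ collapses the geometric sum to the required $O(s)$ bound.

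The main obstacle is the good-case shell estimate, where the slow entrywise decay $\sqrt{s/r}$ is delicate relative to the quadratic growth of shell cardinalities. The $O(s)$ bound closes precisely because, in any regime where the good case is non-vacuous, one automatically has $|\mathcal{E}_s(16T)| < |\sph|$, which forces $q^{4T}$ to be well below $s$, so the tree-ball cardinality cap truncates the dyadic sum at an index small enough for the geometric series to yield the target. The bad case, in contrast, follows straightforwardly from combining the volume bound with the pointwise Cauchy--Schwarz estimate.
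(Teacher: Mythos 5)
Your overall strategy matches the paper's: expand the square into a diagonal sum (handled directly via $\|Z_z^{(s)}\|_2^2\lesssim s$) and an off-diagonal sum, then control the off-diagonal terms using Lemma~\ref{inner product z z'} together with the $s^{-1/2}$-separation supplied by condition~\eqref{e:Tcondition}. Where you genuinely diverge is in the good-case off-diagonal estimate. The paper bounds each inner product \emph{uniformly} by $s^{3/4}$ (the worst-case value at separation $s^{-1/2}$), then applies Cauchy--Schwarz with the crude count $\bigl|\sum_z K(x,z)\bigr|^2 \le q^{4T}\sum_z |K(x,z)|^2$, and closes by invoking ``$c$ sufficiently small so that $q^{4T}<s^{1/4}$.'' You instead reduce via AM--GM to a row-sum bound $\max_z\sum_{z'\neq z}|\la Z_z^{(s)},Z_{z'}^{(s)}\ra|$ and estimate that row sum by dyadic shells, exploiting the actual decay in $\ds^{\pm}$ rather than the worst case. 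Carried through, your dyadic computation gives roughly $q^{3T}s^{3/4}$ where the paper gets $q^{4T}s^{3/4}$, a mild quantitative improvement.

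There is, however, a genuine gap in your final paragraph. You assert that the needed relation between $T$ and $s$ ``closes automatically'' because non-vacuity of the good case forces $|\mathcal{E}_s(16T)|<|\sph|$, and hence $q^{4T}$ well below $s$. That implication does not hold: the balls making up $\mathcal{E}_s(16T)$ can overlap arbitrarily, so a bound on the \emph{measure} of $\mathcal{E}_s(16T)$ gives no control over $q^{16T}$. The correct consequence of the good case being non-empty comes from packing $B_{4T}(x)$: condition~\eqref{e:Tcondition} makes its $\asymp q^{4T}$ points pairwise $s^{-1/2}$-separated in $\ds^{\pm}$, so $q^{4T}\lesssim s$. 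But that only yields $q^{3T}\lesssim s^{3/4}$, and so your dyadic sum is bounded by $s^{3/2}$, not $s$; the geometric series does \emph{not} collapse to $O(s)$ from non-vacuity alone. You need $q^{3T}\lesssim s^{1/4}$ (equivalently $T\le c\log s$ for small $c$, or ``$s$ sufficiently large relative to $T$'' in the free-group case), exactly as the paper's proof explicitly assumes. State this as a hypothesis rather than claiming it is automatic; with that correction the rest of your argument is sound, and it requires a slightly weaker constraint ($q^{12T}\lesssim s$ versus the paper's $q^{16T}\lesssim s$).
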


\begin{proof}
 \begin{align*}
|[K \ast Z^{(s)}](x,y)|^2 
  & =  \sum_{z\in \mathfrak{X}(x)}Z_z^{(s)}(y)^2 | K(x,z) |^2 \\
  &  \quad +  \sum_{z,z' \in \mathfrak{X}(x), z \neq z'} Z_z^{(s)}(y)Z_{z'}^{(s)}(y) \overline{K(x,z)} K(x,z')
   \end{align*}
 
 The first term in the sum is easily handled; the integral over $y$ is applied only to $Z_z^{(s)}(y)^2$, and we know that the $L^2$-norm of $Z_z^{(s)}$ is given by 
 $$||Z_z^{(s)}||^2_2 = \frac{2s+1}{4\pi} \lesssim s$$
 so that
 $$\int_{\sph} \sum_{z\in \mathfrak{X}(x)}Z_z^{(s)}(y)^2 | K(x,z) |^2 d\sigma(y) \lesssim s\cdot \sum_{z\in\mathfrak{X}(x)} |K(x,z)|^2$$
 
We turn our attention to the second sum, and consider first the case where $x\notin\mathcal{E}_s(16T)$. By Condition \eqref{e:Tcondition}, for any point $z\in\mathfrak{X}(x)$ such that $d(x,z) \leq {4T}$ on the tree, the balls $B_{\mathbb{S}^2}(\pm z,s^{-1/2})$  do not contain any other point $z'\in\mathfrak{X}(x)$ such that $d(x,z') \leq {4T}$. Then by Lemma~\ref{inner product z z'}, the integral over $y$ gives 
$$\langle Z_z^{(s)}, Z_{z'}^{(s)}\rangle \lesssim s^{3/4}$$ 
for all terms in the second sum, and so the contribution to the integral (over $y$) of the second sum is estimated by
\begin{eqnarray*}
 \sum_{z,z'\in \mathfrak{X}(x), z\neq z'}  \langle Z_z^{(s)}, Z_{z'}^{(s)}\rangle\overline{K(x,z)}K(x,z')
& \lesssim & s^{3/4} \sum_{z,z'\in \mathfrak{X}(x)} \overline{K(x,z)}K(x,z')\\
& \lesssim & s^{3/4} \left|\sum_{z\in\mathfrak{X}(x), d(x,z)\leq {4T}} K(x,z)\right|^2\\
& \lesssim & s^{3/4}q^{4T} \sum_{z\in\mathfrak{X}(x)} |K(x,z)|^2\\
& \lesssim & s\cdot \sum_{z\in\mathfrak{X}(x)} |K(x,z)|^2
\end{eqnarray*}
if $c$ is sufficiently small so that $q^{4T} < s^{1/4}$.

If $x\in\mathcal{E}_s(16T)$, then we  estimate the inner product trivially by
$$\langle Z_z^{(s)}, Z_{z'}^{(s)}\rangle \leq ||Z_z^{(s)}||_2||Z_{z'}^{(s)}||_2 \lesssim s$$
and apply the same calculation to get
$$\int_{\sph} |[K\ast Z^{(s)}](x,y)|^2 \, d\sigma(y) \lesssim sq^{4T} \sum_{y\in\mathfrak{X}(x)} |K(x,y)|^2$$
\end{proof}

We now proceed as in the proof of Proposition~\ref{p:egorov_graphs} in Section~\ref{main estimate graphs}.
In what follows, we will denote simply by $P_n$ the operator $P_n(T_q/2)$.

The rest of the argument closely follows the proof on graphs from Section~\ref{main estimate graphs}. We denote by $K_T(x,y)$ the kernel of the operator $$A_T = \frac1T \sum_{n=0}^T P_{2n} a P_{2n} \tilde Z^{(s)}.$$ Let us first give an explicit expression for the kernel $K_{2n}$ of the operator $P_{2n}aP_{2n}$. For this purpose we define the sets
 \[E_{j,k} = E_{j,k}(x,y) = \{ z \in \mathfrak{X}(x): d(x,z) = j, d(y,z) = k \}.\] 
The kernel $K_{2n}(x,y)$ is then given by
\begin{align}\label{e:sphere_kernel}
 K_{2n}(x,y) &= \frac1{4q^{2n}} \sum_{z\in E_{2n,2n}} a(z) \\
 &\quad + \frac{(1-q)}{4q^{2n}} \sum_{j=0}^{n-1}\left( \sum_{z\in E_{2j,2n}} a(z) + \sum_{z\in E_{2n,2j}} a(z) \right) \nonumber \\
 &\quad + \frac{(1-q)^2}{4q^{2n}} \sum_{j,k=0}^{n -1} \sum_{z\in E_{2j,2k}} a(z). \nonumber
\end{align}
This kernel is equal to $0$ whenever $d(x,y)$ is odd or $d(x,y) > 4n$. We then have $K_T = \frac1T \sum_{n=0}^T [K_{2n}\ast Z^{(s)}](x,y)$.

We now separate the good and bad points. Define
\begin{equation*}
A_T' f(x) = \left\{
\begin{array}{ll}
   A_T f(x) & \text{if } x \notin \mathcal E_s(16T) \\
   0 & \text{otherwise.}
\end{array}
\right.
\end{equation*}
We then have
\begin{lemma}
$$ \|A_T\|_{HS}^2 \leq \| A_T' \|_{HS}^2 + O\left(s^{1/2} q^{16T} \|a\|_\infty^2\right) $$
\end{lemma}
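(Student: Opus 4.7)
My plan is to exploit the fact that $A_T$ and $A_T'$ differ only by the restriction of the kernel $K_T(x,y)$ to $x\in\mathcal{E}_s(16T)$: since $A_T'$ has kernel $K_T(x,y)\mathbf{1}_{\{x\notin\mathcal{E}_s(16T)\}}$, the kernels of $A_T'$ and $A_T-A_T'$ have disjoint supports in the first variable, and so $\|A_T\|_{HS}^2 = \|A_T'\|_{HS}^2 + \|A_T-A_T'\|_{HS}^2$ splits orthogonally. The task therefore reduces to showing
\[
\|A_T-A_T'\|_{HS}^2 = \int_{x\in\mathcal{E}_s(16T)}\int_{\sph} |K_T(x,y)|^2\, d\sigma(y)\, d\sigma(x)\lesssim s^{1/2} q^{16T}\|a\|_\infty^2.
\]

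For the inner integral, the natural tool is Lemma~\ref{l:HSnorm}, applied to the tree-level kernel $K_T^{\text{tree}}(x,z):=\frac{1}{T}\sum_{n=0}^T K_{2n}(x,z)$. This kernel is supported where $d(x,z)\le 4T$, and by construction $K_T(x,y)=[K_T^{\text{tree}}\ast Z^{(s)}](x,y)$. The ``bad'' case of Lemma~\ref{l:HSnorm} then yields, for each $x\in\mathcal{E}_s(16T)$,
\[
\int_{\sph}|K_T(x,y)|^2\,d\sigma(y)\lesssim s\, q^{4T}\sum_{z\in\mathfrak{X}(x)}|K_T^{\text{tree}}(x,z)|^2.
\]
To bound the tree-level $\ell^2$-sum I would invoke the pointwise estimate $|K_{2n}(x,z)|\lesssim \|a\|_\infty$, which follows from the explicit formula \eqref{e:sphere_kernel} by the same ball-counting argument that gave $\sup_{x,y}|K_{2n}(x,y)|\le \|a\|_\infty$ in the graph proof of Section~\ref{main estimate graphs}, combined with the fact that $K_T^{\text{tree}}(x,\cdot)$ is supported on a ball of cardinality $\lesssim q^{4T}$.

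The second ingredient is the measure of the bad set. The free-group assumption allows one to count: the number of reduced words of length at most $16T$ in $g_1,\ldots,g_N$ is $\lesssim q^{16T}$, and each non-trivial rotation fixes exactly two antipodal points on $\sph$, so $|\mathcal{F}_{16T}|\lesssim q^{16T}$. Each spherical cap of radius $s^{-1/4}$ has area $\lesssim s^{-1/2}$, and hence $\sigma(\mathcal{E}_s(16T))\lesssim s^{-1/2} q^{16T}$. Multiplying the inner-integral estimate by this area bound produces the desired inequality, up to reshuffling the overall factor of $q^T$ into the constant implied in Lemma~\ref{l:algebraicity and discreteness}.

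The point that requires the most care is the bookkeeping of $q^T$-powers: a crude pointwise estimate using $|Z_z^{(s)}|\lesssim s$ on every summand would lose factors of $q^{4T}$ in the ``bad'' region, so it is essential to pass through Lemma~\ref{l:HSnorm} in order to harvest the near-orthogonality of the zonal harmonics $Z_z^{(s)}$ attached to distinct tree-vertices $z$. Structurally the argument is a direct transcription of the disjoint-support decomposition used for graphs in Section~\ref{main estimate graphs}; the only genuinely new input is the volumetric estimate on $\mathcal{E}_s(16T)$, which is where the free-group (or algebraicity) hypothesis enters.
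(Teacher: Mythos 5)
Your proposal is correct and follows the same route as the paper's proof: the disjoint-support splitting of $\|A_T\|_{HS}^2$, an application of the ``bad'' case of Lemma~\ref{l:HSnorm} for the inner $y$-integral, the pointwise bound $\sup_{x,y}|K_{2n}(x,y)|\le\|a\|_\infty$ on the tree-level kernel, and the volume count $\sigma(\mathcal{E}_s(16T))\lesssim s^{-1/2}q^{16T}$. You are also right that the power of $q$ one obtains this way is larger than the stated $q^{16T}$, and that this is harmless because $T=c_0\log s$ with $c_0$ small enough absorbs any fixed power of $q^T$; the paper's own computation has the same slack. One small inaccuracy in your closing remark: in the bad region the near-orthogonality of $\langle Z_z^{(s)},Z_{z'}^{(s)}\rangle$ is \emph{not} available and Lemma~\ref{l:HSnorm} falls back on the trivial Cauchy--Schwarz bound $\lesssim s$; what passing through Lemma~\ref{l:HSnorm} buys over a fully pointwise $|Z_z^{(s)}(y)|\lesssim s$ estimate is a factor of $s$ (replacing $s^2q^{8T}$ by $sq^{8T}$), not the factor of $q^{4T}$ you mention.
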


\begin{proof}
We have
$$
\| A_T - A'_T\|_{HS}^2 = \frac{1}{T^2} \int_{x\in\mathcal{E}_s(16T)}\int_{y\in\sph} \left|\sum_{n=0}^T[K_{2n}\ast Z^{(s)}](x,y)\right| ^2 d\sigma(y)d\sigma(x)
$$

We then use Lemma~\ref{l:HSnorm} to write
\begin{align*}
\| A_T - A'_T\|_{HS}^2 
	&\lesssim \frac{s q^{4T}}{T^2} \int_{x\in\mathcal{E}_s(16T)}  \sum_{\substack{y \in \mathfrak{X}(x)\\ d(x,y) \leq 4T}} \left| \sum_{n=0}^T K_{2n}(x,y)\right|^2 \, d\sigma(x) \\
	&\lesssim s q^{8T} \sigma(\mathcal{E}_s(16T)) \|a\|_\infty^2,
\end{align*}
where we used the fact that $\sup_{x,y} K_{2n}(x,y) \leq \| a \|_\infty$. We then notice that $\mathcal{E}_s(16T)$ is a union of $O(q^{16T})$ balls of radius $s^{-1/4}$ and obtain
$$ \| A_T - A'_T\|_{HS}^2 \lesssim s^{1/2} q^{16T} \|a\|_\infty^2.$$
\end{proof}

We can now restrict our attention to the operator $A_T'$. We write 
$$ A_T' = \frac1T \sum_{n=0}^T \frac1{q^{2n}} \sum_{l=0}^{2n}\sum_{j,k=0}^n c(j,k) S_{2j,2k,2l}, $$
where the operator $S_{2j,2k,2l}$ is defined by its kernel on points $x,y \in \sph$
\begin{equation}\label{e:S kernel sphere}
[S_{2j,2k,2l}](x,y) = \sum_{z\in\mathfrak X(x)} Z_z^{(s)}(y) K(x,z),
\end{equation}
and $K(x,y)$ is given by
 $$K(x,y) = \mathbf{1}_{\{x\notin \mathcal E_s(16T), \; y\in \mathfrak X(x), \; d(x,y)=2l \}} \sum_{z\in E_{2j,2k}(x,y)} a(z). $$

Interchanging the sums in $l$ and $n$, which will be useful later, we obtain
\begin{equation}\label{e:AT sphere}
 A_T' = \frac1T \sum_{l=0}^{2T} \sum_{n=\lceil l/2 \rceil}^{T} \frac1{q^{2n}} \sum_{j,k=0}^n c(j,k) S_{2j,2k,2l},
\end{equation}

The proof of Proposition \ref{p:egorov} will follow from the following estimate:
 \begin{lemma}\label{l:Sop sphere}
$$ \| S_{2j,2k,2l} \|_{HS} \lesssim s^{1/2} \, q^{(k+j)} e^{-\frac{\beta}2 (k+j-l)} \|a \|_2 $$
 \end{lemma}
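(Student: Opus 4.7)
The plan is to follow the structure of the proof of Lemma~\ref{l:Sop} in the graph setting, replacing the sum over a fundamental domain in $G$ by an integral over $\sph$, and using Lemma~\ref{l:HSnorm} to pass from the ``smoothed'' kernel $[K \ast Z^{(s)}](x,y)$ back to the unsmoothed kernel $K(x,y)$ on the tree. The key step, where the new spherical ingredient enters, is this first reduction; once performed, the rest becomes essentially combinatorial on the trees $\mathfrak{X}(x)$.

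First I would write out the Hilbert-Schmidt norm as a double integral over $\sph \times \sph$ and apply Lemma~\ref{l:HSnorm} to the kernel $[S_{2j,2k,2l}](x,y)$ defined in \eqref{e:S kernel sphere}, together with the observation that $K(x,y)$ vanishes unless $x \notin \mathcal{E}_s(16T)$, so the better of the two alternatives in Lemma~\ref{l:HSnorm} applies whenever $K(x,\cdot) \not\equiv 0$. This gives
\[
\|S_{2j,2k,2l}\|_{HS}^2 \lesssim s \int_\sph \sum_{\substack{y \in \mathfrak{X}(x) \\ d(x,y)=2l}} \left| \sum_{z \in E_{2j,2k}(x,y)} a(z) \right|^2 d\sigma(x).
\]

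Next, for each pair $(x,y)$ with $d(x,y)=2l$, I would apply Lemma~\ref{l:Tq sum} verbatim on the tree $\mathfrak{X}(x)$ to rewrite the inner sum as $\sum'_{e^- = w} q^{k+j-l} (T_q')^{k+j-l} Ba(e)$, where $w$ is the appropriate vertex on the segment $[x,y]$ and $T_q'$, $B$ are the arc-graph operators from Section~\ref{main estimate graphs}, now interpreted on the tree $\mathfrak{X}(x)$ (and its arc tree) embedded in $\sph$. The number of terms in the sum $\sum'_{e^-=w}$ is bounded by $q$, so by Cauchy-Schwarz the squared inner expression is $\lesssim q \cdot q^{2(k+j-l)} \sum'_{e^- = w} |(T_q')^{k+j-l} Ba(e)|^2$.

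The remaining step is the spherical analogue of Lemma~\ref{l:edgenorm}: integrate over $x \in \sph$ and sum over $y \in \mathfrak{X}(x)$ with $d(x,y)=2l$. The bookkeeping is that the pair $(x,y)$ is determined by $x$ and a choice of reduced word of length $2l$; once we fix the edge $e$ with $e^- = w$, the freedom in choosing the $x$-side of the segment $[x,y]$ from $w$ and the $y$-side (on the opposite side of $e$) contributes a factor $\lesssim q^{2l}$, reducing the sum-integral to $q^{2l} \int_\sph |(T_q')^{k+j-l} Ba(e(x))|^2 d\sigma(x)$ for an appropriate rotation-equivariant parametrization of edges by $x$. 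Finally, I would invoke the analogue of Lemma~\ref{l:edgegap} applied to $T_q'$ on $L^2$ of the sphere edges arising from functions in $H$: the $2\times 2$ matrix computation in that lemma is purely algebraic and goes through verbatim, yielding $\|(T_q')^{k+j-l} Ba\|_2 \lesssim e^{-\beta_H (k+j-l)/2} \|Ba\|_2 \lesssim e^{-\beta_H (k+j-l)/2}\|a\|_2$. Combining all of the above:
\[
\|S_{2j,2k,2l}\|_{HS}^2 \lesssim s \cdot q^{2(k+j-l)} \cdot q^{2l} \cdot e^{-\beta_H (k+j-l)} \|a\|_2^2 = s \, q^{2(k+j)} e^{-\beta_H(k+j-l)} \|a\|_2^2,
\]
which upon taking a square root gives the claim.

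The main obstacle I anticipate is in the passage from the graph-side edge counting (Lemma~\ref{l:edgenorm}) to the sphere, where the tree $\mathfrak{X}(x)$ genuinely depends on $x$. One has to set up a parametrization of the ``arcs over $\sph$'' that is compatible with the $T_q'$-action and use the rotation invariance so that the $L^2(\sph)$ norm of $(T_q')^n Ba$ makes sense and inherits the spectral gap of $T_q\restriction_H$. The other delicate point is ensuring that $Ba$ indeed lies in (a space controlled by) $H$ so that the gap $\beta_H$ applies, which follows because $B$ and $T_q'$ are built from rotations that commute with the spherical harmonic decomposition.
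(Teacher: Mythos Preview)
Your proposal is correct and follows essentially the same route as the paper. The paper simply packages the three ``analogues'' you describe as explicit lemmas stated just before the proof: the space $L^2(\sph')$ of arcs over the sphere with its operator $T_q'$ and the spectral-gap Lemma~\ref{l:edgegapsphere} (your analogue of Lemma~\ref{l:edgegap}), the rewriting Lemma~\ref{l:Tq sum sphere} (your analogue of Lemma~\ref{l:Tq sum}), and the counting Lemma~\ref{l:L2edge} (your analogue of Lemma~\ref{l:edgenorm}); with these in hand the proof is the three-line chain you wrote. The obstacle you flag---making sense of $(T_q')^n Ba$ as an $L^2$ object over $\sph$ and inheriting the gap $\beta_H$---is exactly what the definition of $L^2(\sph')$ and the $2\times2$ matrix computation in Lemma~\ref{l:edgegapsphere} resolve, so your instincts there are on target. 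One small slip: in the penultimate display you wrote $e^{-\beta_H(k+j-l)/2}$ for $\|(T_q')^{k+j-l}Ba\|_2$, but Lemma~\ref{l:edgegap} (and its sphere version) give $e^{-\beta(k+j-l)}$ there; your final squared bound $e^{-\beta_H(k+j-l)}$ is nonetheless correct.
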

 Indeed, the estimation of the Hilbert-Schmidt norm of $A'_T$ using this lemma is here exactly the same as in the case of graphs up to a factor $s$. We reproduce it for the convenience of the reader. Let us first notice that for $l\neq l'$, the operators $S_{2j,2k,2l}$ and $S_{2j',2k',2l'}$ are orthogonal with respect to the Hilbert-Schmidt norm. We deduce from this fact and the expression \eqref{e:AT sphere} that
 \begin{align*}
\|A_T' \|^2_{HS} &= \frac1{T^2} \sum_{l=0}^{2T} \left\| \sum_{n=\lceil l/2 \rceil}^T \frac1{q^{2n}} \sum_{j,k=0}^n   c(j,k) S_{2j,2k,2l} \right\|_{HS}^2 \\
&\lesssim \frac1{T^2} \sum_{l=0}^{2T} \left( \sum_{n=\lceil l/2 \rceil}^T \frac1{q^{2n}} \sum_{j,k=0}^n \| S_{2j,2k,2l} \|_{HS}  \right)^2 \\
&\lesssim \frac{s}{T^2} \sum_{l=0}^{2T} \left( \sum_{n=\lceil l/2 \rceil}^T \frac1{q^{2n}} \sum_{j,k=0}^n q^{(k+j)} e^{-\frac{\beta}2 (k+j-l)} \|a\|_2 \right)^2 \\
&\lesssim \frac{s}{T^2} \sum_{l=0}^{2T}  \left( \sum_{n=\lceil l/2 \rceil}^T e^{-\beta(n-l/2)} \|a\|_2 \right)^2 \\
&\lesssim \frac{ s \|a\|^2_2}{\beta^2 T}.
\end{align*}

We now want to prove Lemma~\ref{l:Sop sphere}. We will first write the kernel of the operators $S_{2j,2k,2l}$ in a more convenient way.
We consider the space $L^2(\sph')$, where $\sph' = \{(x,y) \in \sph \times \sph, \exists i \in \{1,\ldots,N\}, g_i\cdot x = y \}$ and the measure is given by $\sum_{i=1}^N d\sigma(x) \otimes \delta_{g_i \cdot x}$. We define the operator 
$$ T_q'F(x,y) = \frac1{q} \sum_{g_i \cdot y \neq x} F(y,g_i \cdot y). $$
We then have the following analog of Lemma \ref{l:edgegap}.
\begin{lemma}\label{l:edgegapsphere}
For every $k \geq 1$, we have
$$ \| (T_q')^k \| \lesssim e^{-\beta k}$$
with an implicit constant depending only on $q$.
\end{lemma}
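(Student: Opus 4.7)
The plan is to follow the proof of Lemma~\ref{l:edgegap} essentially verbatim: the algebraic structure relating $T_q'$, $T_q$, $B$, and $E$ is formally identical in the sphere setting, so the problem reduces to estimating powers of the same $2\times 2$ matrix.

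I would first define $B,E\colon L^2(\sph) \to L^2(\sph')$ by $Bf(x,y) = f(x)$ and $Ef(x,y) = f(y)$, exactly as in \eqref{e:beginend}, and consider the decomposition $L^2(\sph') = (\operatorname{Im} B + \operatorname{Im} E) \oplus (\operatorname{Im} B + \operatorname{Im} E)^{\perp}$. A Fubini-type computation, using that the $g_i$ preserve $\sigma$, gives $\langle Bf,Bg\rangle = \langle Ef, Eg\rangle = (q+1)\langle f,g\rangle$ and $\langle Bf, Eg\rangle = \sqrt{q}\,\langle f, T_q g\rangle$, so if $\{w_j\}$ is a $T_q$-eigenbasis of $L^2(\sph)$ with eigenvalues $\lambda_j$, the planes $\mathbb{C}Bw_j + \mathbb{C}Ew_j$ are mutually orthogonal in $L^2(\sph')$.

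Next I would compute, exploiting the freeness of $\langle g_1,\ldots,g_N\rangle$, that for each $T_q$-eigenfunction $w$ with $T_qw = \lambda w$, $T_q'Bw = Ew$ and $T_q'Ew = -\tfrac{1}{q}Bw + \tfrac{\lambda}{\sqrt{q}}Ew$. The first identity is immediate: among the $2N$ generators $g_i^{\pm 1}$ exactly one reverses the directed edge $(x,g_{i_0}x)$, so the defining sum contains $q = 2N-1$ copies of $w(y)$. The second identity is obtained by adding and subtracting the excluded term $w(x)$ from $\sqrt{q}\,T_qw(y)$. Thus each plane is $T_q'$-invariant with matrix $\begin{pmatrix} 0 & -1/q \\ 1 & \lambda/\sqrt{q} \end{pmatrix}$ in the (non-orthonormal) basis $(Bw, Ew)$, exactly the matrix appearing in the proof of Lemma~\ref{l:edgegap}. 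On the orthogonal complement $(\operatorname{Im} B + \operatorname{Im} E)^{\perp}$, consisting of $F$ satisfying $\sum_i F(x,g_i x) = \sum_i F(g_i^{-1} y, y) = 0$ a.e., a direct calculation gives $T_q'F(x,y) = -\tfrac{1}{q}F(y,x)$, so $\|(T_q')^k\|$ is bounded by $q^{-k}$ on this piece, well within the claimed bound.

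From here the analysis is word-for-word that of Lemma~\ref{l:edgegap}: conjugating by $\operatorname{diag}(q^{1/4}, q^{-1/4})$ yields $q^{-1/2}\bigl(\begin{smallmatrix} 0 & -1 \\ 1 & \lambda\end{smallmatrix}\bigr)$, whose $n$-th power has entries $q^{-n/2}U_m(\lambda/2)$ with $U_m$ the Chebyshev polynomials of the second kind. The spectral gap $|\lambda|\leq 2\cosh\!\bigl(\tfrac{\log q}{2}-\beta\bigr)$, which holds on $L^2_0(\sph)$ by the Bourgain--Gamburd theorem \cite{BG} (or, in the weaker setting of Theorem~\ref{t:spheremain}, trivially on a finite-dimensional $T_q$-invariant subspace), gives $q^{-n/2}|U_m(\lambda/2)| \lesssim e^{-\beta n}$, yielding the estimate. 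The only subtlety, rather than a genuine obstacle, is the degenerate top eigenvalue $\lambda = (q+1)/\sqrt q$ coming from constant functions on $\sph$: there $Bw = Ew$, the Gram matrix of $(Bw, Ew)$ becomes singular, and the plane collapses to the one-dimensional $T_q'$-eigenspace of eigenvalue $1$; this is implicitly excluded (the norm in the lemma is taken on the orthogonal complement of constants in $L^2(\sph')$), consistent with the fact that the estimate is only ever applied to $(T_q')^k Ba$ for $a \in L^2_0(\sph)$.
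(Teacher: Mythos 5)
Your proof is correct and follows the same route as the paper, which simply notes that the argument of Lemma~\ref{l:edgegap} carries over: the decomposition $L^2(\sph') = (\operatorname{Im} B \oplus \operatorname{Im} E) \oplus (\operatorname{Im} B \oplus \operatorname{Im} E)^{\perp}$, the $2\times 2$ matrix $\bigl(\begin{smallmatrix} 0 & -1/q \\ 1 & \lambda/\sqrt{q}\end{smallmatrix}\bigr)$ on each plane $\mathbb{C}Bw + \mathbb{C}Ew$, the $-\tfrac1q F(y,x)$ action on the complement, and the Chebyshev estimate after conjugation. What you add beyond the paper's terse ``completely analogous'' is genuinely useful: the Gram computations $\langle Bf,Bg\rangle = \langle Ef,Eg\rangle = (q+1)\langle f,g\rangle$ and $\langle Bf,Eg\rangle = \sqrt{q}\langle f,T_q g\rangle$ that justify mutual orthogonality of the planes, the explicit verification of $T_q'Bw = Ew$ and $T_q'Ew = -\tfrac1q Bw + \tfrac{\lambda}{\sqrt q}Ew$ (where freeness of $\langle g_1,\dots,g_N\rangle$ is actually needed so that $g_i y \neq x$ holds for exactly $q$ of the $q+1$ generators $\sigma$-a.e.), and the observation that the top eigenvalue $\lambda = 2\cosh(\log q/2)$ from constants must be excluded since there $Bw = Ew$ and $T_q'$ has eigenvalue $1$ --- a caveat the paper leaves implicit in both Lemmas~\ref{l:edgegap} and~\ref{l:edgegapsphere}, but which is harmless because the bound is only ever applied to $(T_q')^kBa$ with $a$ orthogonal to constants.
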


\begin{proof}
The maps $B,E : L^2(\sph) \to L^2(\sph')$  are here given by
$$ Bf(x,y) = f(x), Ef(x,y) = f(y) $$
The space $L^2(\sph')$ can be divided into $\text{Im}(B)\oplus\text{Im}(E)$ and its orthogonal complement given by the functions $F$ such that for any $x \in \sph$
$$ \sum_{i=1}^N F(x,g_i x) = \sum_{i=1}^N F(g_i x,x) = 0$$
On this orthogonal complement the action of $T_q'$ is given by
$$ T_q'F(x,y) = - \frac{1}{q} F(y,x), $$
and we can decompose $\text{Im}(B)\oplus\text{Im}(E)$ using an orthonormal basis of $T_q$. For every eigenfunction $w \in L^2(\sph)$ with $T_q$-eigenvalue $\lambda$, the space $\C(Bw) + \C(Ew)$ is stable under $T_q'$ and the action of $T_q'$ is given by the matrix
\begin{equation*}
 \begin{pmatrix}
  0 & -\frac{1}{q} \\
  1 & \frac{\lambda}{\sqrt{q}}
 \end{pmatrix}
\end{equation*}
The rest of the proof is completely analogous to the proof of Lemma \ref{l:edgegap}.
\end{proof}

The following analog of Lemma~\ref{l:Tq sum} allows us to rewrite the kernel of $S_{2j,2k,2l}$ using the operator $T'_q$.

 \begin{lemma}\label{l:Tq sum sphere}
 Let $0 \leq j,k \leq n$.
 Let $x \in \sph$ and $y \in \mathfrak{X}(x)$ such that $l = d(x,y)/2$ satisfies $|k-j| \leq l \leq k+j$. Denote by $w$ the vertex of the segment $[x,y]$ such that  $d(x,w) = l - (k-j)$ and $d(y,w) = l + (k-j)$ in the case $k\geq j$, and $d(x,w) = l + (k-j)$ and $d(y,w) = l - (k-j)$ in the case $j \geq k$. Then
  \begin{equation}\label{e:vertextoedge sphere}
  \sum_{z\in E_{2j,2k}} a(z) = \sideset{}{'}\sum_{g_i} q^{k+j-l} (T'_q)^{k+j-l} B a (w,g_i w)
  \end{equation}
  where the sum on the right-hand side runs only over the rotations $g_i$ such that $g_i w$ is not on the segment $[x,y]$.
 \end{lemma}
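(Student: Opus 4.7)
The plan is to reduce the identity to a purely tree-combinatorial statement in the regular tree $\mathfrak{X}(x)$ and then verify it by unwinding the definition of $T_q'$; the argument is formally parallel to the proof of the graph analog, Lemma~\ref{l:Tq sum}.

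The first step is to exploit the tripod structure of the tree. Given $z\in E_{2j,2k}(x,y)$, the three vertices $x,y,z$ span a tripod whose center $m$ is the unique vertex common to the three geodesic segments $[x,y]$, $[x,z]$, $[y,z]$. The standard formulas for the arm lengths, combined with the pairwise distances $d(x,y)=2l$, $d(x,z)=2j$, $d(y,z)=2k$, give
\[
d(x,m)=l+j-k,\qquad d(y,m)=l+k-j,\qquad d(z,m)=k+j-l,
\]
which identifies $m$ with the vertex $w$ described in the statement; in particular $z$ lies at distance exactly $k+j-l$ from $w$, and the geodesic from $w$ to $z$ leaves $[x,y]$ at its first edge $(w,g_i w)$, which by definition of the tripod center must satisfy $g_i w\notin[x,y]$. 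Conversely, any vertex reached from $w$ by a non-backtracking walk of length $k+j-l$ whose first edge leaves $[x,y]$ lies in $E_{2j,2k}(x,y)$. This parametrises the sum on the left of \eqref{e:vertextoedge sphere} as a disjoint union, indexed by the admissible directions $g_i$, of the endpoint sets of non-backtracking walks of length $k+j-l$ starting at $w$ through $(w,g_i w)$.

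The second step is direct: induction on $p\geq 1$ using the definition $T_q'F(u,v)=\tfrac1q\sum_{g_iv\neq u}F(v,g_iv)$ shows that for any edge $(u,v)$ in the tree,
\[
(T_q')^{p}Ba(u,v)\;=\;c_{p,q}\sum_{z} a(z),
\]
where $z$ ranges over the endpoints of non-backtracking walks of length $p$ starting at $u$ through $v$ and $c_{p,q}$ is the corresponding power of $q$ coming from balancing the $1/q$ normalisations against the $q$ admissible continuations available at each step; the base case $p=1$ is immediate from the fact that the $q$ admissible continuations of $(u,v)$ each contribute $a(v)$. Specialising to $p=k+j-l$ and $(u,v)=(w,g_i w)$ and summing over the admissible directions $g_i$ then yields \eqref{e:vertextoedge sphere}.

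The lemma is a combinatorial identity in the abstract $(q+1)$-regular tree $\mathfrak{X}(x)$ with no analytic content (the embedding into $\sph$ plays no role), so no serious obstacle arises. The only point requiring a little care is the boundary case $l=|k-j|$, in which $w$ coincides with one of the endpoints $x$ or $y$ and so only one edge at $w$ lies on $[x,y]$; the side condition ``$g_i w\notin[x,y]$'' handles this case uniformly together with the generic interior case.
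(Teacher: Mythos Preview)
Your approach is correct and is precisely what the paper has in mind; the paper in fact gives no proof of this lemma (nor of its graph analogue, Lemma~\ref{l:Tq sum}), treating it as a routine tree-combinatorial identity, and the tripod description you give is the natural way to unpack it.

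One point to tighten: you leave $c_{p,q}$ vague, but your own base case already gives $c_{1,q}=1$, and in general $c_{p,q}=q^{-(p-1)}$ for $p\geq 1$. Hence $q^{k+j-l}(T_q')^{k+j-l}Ba(w,g_iw)$ is $q$ times the branch sum, not the branch sum itself, so the displayed identity \eqref{e:vertextoedge sphere} as literally stated is off by a factor of $q$ (and the degenerate case $k+j-l=0$, where $E_{2j,2k}=\{w\}$ lies on $[x,y]$, is off by a different factor). This is immaterial for the paper, which only uses the lemma through the upper bound in Lemma~\ref{l:Sop sphere} where $q$-dependent constants are absorbed into $\lesssim$; but your final sentence ``then yields \eqref{e:vertextoedge sphere}'' is not literally true, and a careful write-up would record the correct power of $q$ and treat the case $k+j-l=0$ separately.
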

We will also use the following lemma, a straightforward analog of Lemma~\ref{l:edgenorm}.
\begin{lemma}\label{l:L2edge}
 Let $k,l$ be fixed integers such that $k \leq 2l \leq 4T$. For any $x \in \mathbb{S}^2$ and $y \in \mathfrak X(x)$ such that $d(x,y) = 2l$, let $w$ be a vertex of the segment $[x,y]$ such that $d(x,w) = k$.  Then
 $$ \int_\sph \sum_{y\in S_{2l}(x)} \sum_{g} |F (w,gw)| d\sigma(x) \leq q^{2l} \int_\sph \sum_{g} | F (x,gx)| d\sigma(x) $$
\end{lemma}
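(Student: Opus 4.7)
The plan is to imitate the change-of-variables argument used for Lemma~\ref{l:edgenorm} on graphs, replacing the sum over a combinatorial fundamental domain by an integral over $\sph$ and using the rotation invariance of $d\sigma$ in place of $\Gamma$-invariance on the tree.

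First I would reorganize the inner double sum on the left-hand side by partitioning the $y \in S_{2l}(x)$ according to the vertex $w = w(x,y) \in S_k(x)$ that they determine. For a fixed $x$ and a fixed $w \in S_k(x)$, the number of $y \in S_{2l}(x)$ whose segment $[x,y]$ passes through $w$ equals the number of non-backtracking paths of length $2l - k$ in $\mathfrak X(x)$ starting at $w$ and avoiding the unique neighbor of $w$ pointing back toward $x$, which is $\lesssim q^{2l-k}$. This gives
\[
\int_\sph \sum_{y \in S_{2l}(x)} \sum_g |F(w, gw)| \, d\sigma(x)
\lesssim q^{2l-k} \int_\sph \sum_{w \in S_k(x)} \sum_g |F(w, gw)| \, d\sigma(x).
\]

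Next I would parametrize the vertices of $S_k(x)$ by reduced words $h$ of length $k$ in $g_1^{\pm 1},\ldots,g_N^{\pm 1}$ via $w = h\cdot x$. Since $g_1,\ldots,g_N$ generate a free subgroup of $\SO(3)$, for $\sigma$-almost every $x$ this assignment is a bijection between the set of reduced words of length $k$ (of which there are $\lesssim q^k$) and $S_k(x)$, so no overcounting occurs. Crucially, each such $h$ acts on $\sph$ as a rotation, so the substitution $w = h\cdot x$ is measure-preserving. Thus, for each fixed $h$,
\[
\int_\sph \sum_g |F(h\cdot x, g h \cdot x)| \, d\sigma(x)
= \int_\sph \sum_g |F(w, gw)| \, d\sigma(w).
\]
Summing over the $\lesssim q^k$ admissible words $h$ and combining with the previous estimate produces the factor $q^{2l-k}\cdot q^k = q^{2l}$, which is the claimed bound.

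There is no genuine obstacle here beyond bookkeeping: the argument is a combinatorial count in the tree combined with rotation invariance of $\sigma$. The only point that requires the hypothesis on the $g_i$ is the injectivity of the map $h \mapsto h\cdot x$ on reduced words of length $k$, which holds for $\sigma$-a.e.\ $x$ because the $g_i$ generate a free group (the exceptional set, consisting of axes of finitely many rotations, has measure zero and contributes nothing to the integrals).
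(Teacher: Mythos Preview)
Your proposal is correct and is precisely the argument the paper has in mind: the authors give no proof, calling the lemma ``a straightforward analog of Lemma~\ref{l:edgenorm}'' (whose proof they also omit as ``clear''). Your two-step count---grouping the $y\in S_{2l}(x)$ by the intermediate vertex $w\in S_k(x)$ and bounding the number of continuations by $q^{2l-k}$, then parametrizing $S_k(x)$ by reduced words $h$ of length $k$ and using rotation invariance of $d\sigma$ to change variables---is exactly the sphere analog of the tree/fundamental-domain argument behind Lemma~\ref{l:edgenorm}. One cosmetic remark: your $\lesssim$ is in fact more honest than the paper's stated $\leq q^{2l}$, since the exact combinatorial count is $(q+1)q^{2l-1}$; this is immaterial because the lemma is only ever applied inside estimates with implied constants depending on $q$.
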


\begin{proof}[Proof of Lemma \ref{l:Sop sphere}]
We first use Lemma~\ref{l:HSnorm} together with \eqref{e:S kernel sphere} to write
$$ \|  S_{2j,2k,2l} \|_{HS}^2 \lesssim s \int_{x\notin\mathcal E_s(16T)} \sum_{\substack{y\in \mathfrak X(x)\\ d(x,y)=2l}} \left| \sum_{z\in E_{2j,2k}(x,y)} a(z) \right|^2 d\sigma(x)$$
We then use Lemma \ref{l:Tq sum sphere} and Lemma \ref{l:L2edge} to obtain
 \begin{align*}
 \|  S_{2j,2k,2l} \|_{HS}^2 & \lesssim s \int_{x\notin\mathcal E_s(16T)} \sum_{\substack{y\in \mathfrak X\\ d(x,y)=2l}} \left| \sideset{}{'}\sum_{g_i} q^{k+j-l} (T'_q)^{k+j-l} B a (w,g_i w) \right|^2 d\sigma(x) \\
 &\lesssim s \, q^{2(k+j-l)} q^{2l} \int_\sph \sum_g \left| (T'_q)^{k+j-l} B a (x,gx) \right|^2 d\sigma(x).
 \end{align*}
 Finally, Lemma \ref{l:edgegapsphere} gives
 $$ \| S_{2j,2k,2l} \|_{HS}^2 \lesssim s \, q^{2(k+j)} e^{-\beta(k+j-l)} \|a\|_2^2.$$
\end{proof}

\section{Kesten-McKay Law for Spherical Harmonics}
\label{s:Kesten-McKay}

In this section, we prove the following Kesten-McKay law for the space $\mathcal{H}_s$ of spherical harmonics:
\begin{lemma}\label{sphere kesten mckay}
Let $\{\psi_j^{(s)}\}_{j=1}^{2s+1}$ be an orthonormal basis of $\mathcal{H}_s$ consisting of $T_q$-eigenfunctions with $T_q\psi_j^{(s)} = \lambda(s,j) \psi_j^{(s)}$, and let $I\subset [-2,2]$ be an arbitrary interval in the tempered spectrum of $T_q$.  Set
$$N(I,s) := \#\{j : \lambda(s,j) \in I\}$$
to be the dimension of the subspace of $\mathcal{H}_s$ spanned by those $T_q$-eigenfunctions with $T_q$-eigenvalue in $I$.  Then
$$N(I,s)\sim C_q(I) \cdot s$$
as $s\to\infty$, where $C_q(I)>0$ depends only on the interval $I$, and the size of the generating set of rotations.
\end{lemma}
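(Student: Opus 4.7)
The plan is to apply the method of moments to the empirical spectral measures
\[
\mu_s := \frac{1}{2s+1}\sum_{j=1}^{2s+1}\delta_{\lambda(s,j)},
\]
all supported in the fixed compact interval $[-2\cosh(\tfrac{\log q}{2}),\, 2\cosh(\tfrac{\log q}{2})]$, and show they converge weakly to the Kesten--McKay measure $\mu_{KM}$ on $[-2,2]$. Since the Chebyshev polynomials $P_n(\lambda/2)$ span the polynomials, it suffices to identify the limit, for every fixed $n \geq 0$, of
\[
m_n(s) := \frac{1}{2s+1}\operatorname{Tr}\!\bigl(P_n(T_q/2)\!\restriction_{\mathcal H_s}\bigr).
\]

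To compute this trace I would write $\operatorname{Tr}(P_n(T_q/2)\!\restriction_{\mathcal H_s}) = \operatorname{Tr}(P_n(T_q/2)\,\tilde Z^{(s)})$, where the projection $\tilde Z^{(s)}$ onto $\mathcal H_s$ has integral kernel $(x,y) \mapsto Z_x^{(s)}(y)$ by the reproducing property~\eqref{reproducing}. Combining this with the explicit tree kernel of $P_n(T_q/2)$ from Lemma~\ref{estimate}, whose value at the diagonal is the constant $c_n$ with $c_0 = 1$, $c_n = 0$ for odd $n$, and $c_n = \frac{1-q}{2q^{n/2}}$ for even $n \geq 2$, and using $Z_x^{(s)}(x) = \frac{2s+1}{4\pi}$, yields
\[
m_n(s) = c_n + \frac{1}{4\pi}\sum_{\substack{w\neq e\\ |w|\leq n}} [P_n(T_q/2)](|w|)\int_\sph L_s(\cos d_\sph(x,wx))\,d\sigma(x),
\]
where $w$ runs over non-trivial reduced words in the free group generated by the $g_i$ (generic $x\in\sph$ has trivial stabilizer, so the vertices of $\mathfrak X(x)$ are in bijection with such words). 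One checks, using e.g.\ spherical inversion on the tree, that the $c_n$ are precisely the moments of $\mu_{KM}$ against the Chebyshev basis.

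It remains to show each off-diagonal integral tends to $0$. For fixed non-trivial $w$, parametrizing $x$ by the polar angle $\phi$ from the rotation axis of $w$ yields $\cos d_\sph(x,wx) = 1 - 2\sin^2\phi\,\sin^2(\alpha_w/2)$, with $\alpha_w \in (0,\pi]$ the rotation angle of $w$; combined with \eqref{legendre decay} this gives an $o_w(1)$ (in fact power-saving) bound as $s \to \infty$. Since for each fixed $n$ only finitely many words of length $\leq n$ contribute, we conclude $m_n(s) \to c_n$, and hence $\mu_s \rightharpoonup \mu_{KM}$. Since $\mu_{KM}$ is absolutely continuous with positive density on $[-2,2]$, any non-degenerate subinterval $I\subset[-2,2]$ has boundary of $\mu_{KM}$-measure zero, and the Portmanteau theorem yields $N(I,s)/(2s+1) \to \mu_{KM}(I) > 0$, proving the claim with $C_q(I) := 2\mu_{KM}(I)$. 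The main obstacle is the rigorous decay estimate for the oscillatory integral of $L_s(\cos d_\sph(\cdot,w\cdot))$: the bound~\eqref{legendre decay} degenerates near the two fixed points of $w$ on $\sph$, but a narrow tubular neighborhood of those poles carries small surface measure, and a crude $L^\infty$ bound on $L_s$ absorbs the resulting contribution, yielding the required decay.
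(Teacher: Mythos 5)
Your argument follows the same overall strategy as the paper's: express $\operatorname{Tr}\bigl(p(T_q)\restriction_{\mathcal H_s}\bigr)$ for polynomials $p$ via the reproducing kernel $Z^{(s)}_z$, identify the diagonal contribution with the Plancherel (Kesten--McKay) moments, show the off-diagonal contribution is $o(s)$ by combining the decay bound \eqref{legendre decay} for Legendre polynomials with a small exceptional set, and conclude by the method of moments. The differences lie in the organization. First, you test against Chebyshev polynomials $P_n(T_q/2)$ rather than monomials $T_q^n$; this is a genuine simplification, since Lemma~\ref{estimate} hands you the diagonal coefficients $c_n$ immediately and their identification with $\int P_n(\lambda/2)\,d\mu_{KM}$ is tautological ($\mu_{KM}$ is by definition the spectral measure of $T_q$ at $\delta_0$ on the tree). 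Second, for the off-diagonal terms you sum over group elements $w$ and, for each fixed $w$, bound the single integral $\int_\sph L_s(\cos\ds(x,wx))\,d\sigma(x)$ using the explicit formula $\cos\ds(x,wx)=1-2\sin^2\phi\,\sin^2(\alpha_w/2)$ together with a thin-tube estimate around the axis of $w$. The paper instead sums over $z$, introduces the exceptional set $\mathcal E_s(3n)$ of points near a fixed axis of a short word, and bounds $|Z^{(s)}_z(y)|$ pointwise off that set. Your reorganization is cleaner and avoids re-invoking the $\mathcal E_s$ machinery.

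One small loose end: the estimate \eqref{legendre decay} is stated for $\theta\in[0,\pi/2]$ and, after the usual reflection $L_s(\cos\theta)=(-1)^sL_s(\cos(\pi-\theta))$, degenerates both as $\theta\to0$ and as $\theta\to\pi$. Your tube argument handles only the two poles (where $\ds(x,wx)\to0$); you should also say why the equatorial region, where $\ds(x,wx)$ might approach $\pi$, is harmless. In fact $\ds(x,wx)\le\alpha_w$ uniformly, and $\alpha_w=\pi$ cannot occur for nontrivial $w$: it would make $w$ an involution in $\SO(3)$, hence an involution in the group generated by $g_1,\dots,g_N$, contradicting freeness. With $\alpha_w<\pi$ strictly, $\ds(x,wx)$ stays uniformly away from $\pi$ and the argument closes as you describe; it is worth making this observation explicit.
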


\begin{proof}

This is a standard argument that we bring here for completeness.  The idea is to estimate the moments 
$$M_n(s) = \sum_{j=1}^{2s+1} \lambda(s,j)^n = Tr\left(T_q^n\restriction_{\mathcal{H}_s}\right)$$
and show that they agree asymptotically with the moments of the Plancherel measure for the $q+1$-regular tree; solving the inverse moment problem then implies that the eigenvalues of $T_q$ in $\mathcal{H}_s$ are asymptotically distributed according to a continuous, positive distribution on $(-2,2)$; namely, the $q$-adic Plancherel measure.  In particular, this implies that any fixed interval receives a positive proportion of the $T_q$-eigenvalues as $s\to\infty$.

We begin again with the zonal spherical harmonic $Z_z^{(s)}\in\mathcal{H}_s$, which can be written in terms of the $\{\psi_j^{(s)}\}$--- indeed, in terms of any orthonormal basis of $\mathcal{H}_s$--- as
$$Z_z^{(s)}(y) = \sum_{j=1}^{2s+1} \psi_j^{(s)}(z)\overline{\psi_j^{(s)}(y)} $$  
Since the $\{\psi_j^{(s)}\}$ are eigenfunctions of $T_q$, we can write
$$T_q^n Z_z^{(s)}(y) = \sum_{j=1}^{2s+1} \lambda(s,j)^n \psi_j^{(s)}(z) \overline{\psi_j^{(s)}(y)}$$
and so the trace of $T_q^n$ on $\mathcal{H}_s$ is obtained by integrating on the diagonal
\begin{equation}\label{Tq trace}
\int_{z\in\sph} T_q^n Z_z^{(s)}(z) d\sigma(z)= \sum_{j=1}^{2s+1} \lambda(s,j)^n
\end{equation}
We wish to compute the asymptotics of these moments as $s\to\infty$; we will do this by estimating the integrand on the left-hand side point wise.

It is easy to see that
$$T_q^n Z_z^{(s)}(z) = p^{-n/2}N_0(n)Z_z^{(s)}(z) + \sum_{0<d_{\mathfrak{X}}(z,y)\leq n} N_y(n)Z_z^{(s)}(y)$$
where $N_0(n)$ is the number of paths of length $n$ in the $q+1$-regular tree starting and ending at $0$, and similarly $N_y(n)$ is the number of paths of length $n$ from $z$ to $y$.  The number of terms in the second sum, as well as the values of $N_y(n)$, depend only on $n$ and are independent of $s$.  

Now, since the rotations generate a free group, there exists a small exceptional set $z\in\mathcal{E}_s(3n)$ as in Section~\ref{main estimate sphere}, outside of which we know that all points $y$ in the sum are separated from each other and from $z$; i.e. that for $s$ large enough, the distance $d_\sph(z,y)>s^{-1/4}$ for all terms in the second sum.  The estimate (\ref{legendre decay}) then implies that for all $z\notin\mathcal{E}_s(3n)$ we have
$$Z_z^{(s)}(y)\lesssim q^n \max\{N_y(n)\} s^{3/4} \lesssim_n s^{3/4}$$
since the points $y$ and corresponding values of $N_y(n)$ depend only on $n$.
On the other hand 
$$Z_z^{(s)}(z) = Z_0^{(s)}(0) \gtrsim s$$
so that as $s\to\infty$ we have
$$T_q^nZ_z^{(s)}(z) \sim p^{-n/2}N_0(n)Z_0^{(s)}(0)$$
uniformly in $z\in\sph\backslash\mathcal{E}_s(3n)$, and since the right-hand side is independent of $z$, we have
\begin{eqnarray*}
\int_{\sph\backslash\mathcal{E}_s(3n)} T^n_q Z_z^{(s)}(z)d\sigma(z) & \sim & p^{-n/2}N_0(n)Vol(\sph\backslash\mathcal{E}_s(3n))\cdot s\\
& \sim & p^{-n/2}N_0(n)Vol(\sph)\cdot s
\end{eqnarray*}
As in Section~\ref{main estimate sphere}, the integral of $T^n_qZ_z^{(s)}(z)$ over the  set $z\in \mathcal{E}_s(3n)$ is negligible once $s$ is large enough (and thus the set $\mathcal{E}_s(3n)$ small enough).  Therefore the trace satisfies
$$\sum_{j=1}^{2s+1} \lambda(s,j)^n \sim p^{-n/2}N_0(n)Vol(\sph) \cdot s$$
Dividing by $s$ and solving the inverse moment problem (just as in, eg. \cite{SarnakHeckeStatistics}) shows that the asymptotic distribution of the $T_q$-eigenvalues in $\mathcal{H}_s$ is asymptotically proportional to $s$ times the $q$-adic Plancherel measure
$$d\mu_q(x) = \left\{	\begin{array}{ccc}  \frac{	(q+1)\sqrt{4-x^2}}{2\pi (q+q^{-1}+2)-x^2} & \quad\quad\quad & |x|\leq 2\\ 
0 & \quad\quad\quad & |x|\geq 2	\end{array}		\right.$$
which is a continuous and non-negative measure supported in $[-2,2]$, and strictly positive in the interior $(-2,2)$ (note that it vanishes only at the endpoints).  This shows that any interval $I\subset [-2,2]$ contains a number of $T_q$-eigenvalues 
\begin{eqnarray*}
N(I,s) & \sim & s\cdot \int_{x\in I} d\mu_q(x)
\end{eqnarray*}
as required.
\end{proof}

\bibliographystyle{plain}

%\bibliography{biblio}
\def\cprime{$'$}

\end{document}